\documentclass[12pt, reqno]{amsart}

\usepackage[a4paper,margin=1.12in]{geometry}
\usepackage{amsmath,amssymb,amsthm,mathtools}
\usepackage{enumitem}
\usepackage[colorlinks=true,citecolor=blue,linkcolor=blue,urlcolor=blue]{hyperref}
\usepackage[nameinlink,capitalise,noabbrev]{cleveref}

\newtheorem{theorem}{Theorem}[section]
\newtheorem{proposition}[theorem]{Proposition}
\newtheorem{lemma}[theorem]{Lemma}
\newtheorem{corollary}[theorem]{Corollary}
\theoremstyle{definition}

\newtheorem{question}[theorem]{Question}

\theoremstyle{remark}
\newtheorem{remark}[theorem]{Remark}

\DeclareMathOperator{\osc}{osc}
\DeclareMathOperator{\diam}{diam}
\DeclareMathOperator{\graph}{graph}
\newcommand{\R}{\mathbb R}

\newcommand{\G}{\mathcal G}
\newcommand{\dimH}{\dim_{\mathrm H}}

\newcommand{\udimB}{\overline{\dim}_{\mathrm B}}
\newcommand{\ldimB}{\underline{\dim}_{\mathrm B}}
\newcommand{\dimA}{\dim_{\mathrm A}}
\newcommand{\dimL}{\dim_{\mathrm L}}
\newcommand{\dimAs}[1]{\dim_{\mathrm A}^{#1}}

\title[ASSOUAD AND LOWER DIMENSIONS OF WEIERSTRASS-TYPE FUNCTIONS]
{Assouad and Lower Dimensions of Graphs of Weierstrass-Type Functions with Rapidly Growing Frequencies}

\author{Jun Jason Luo}
\address{College of Mathematics and Statistics,  Chongqing University, Chongqing 401331, P.R. China}
\email{jun.luo@cqu.edu.cn}

\subjclass[2020]{Primary 28A80; Secondary 26A27, 42A55}
\keywords{Weierstrass-type function, rapidly growing frequencies,
Assouad dimension, lower dimension, Assouad spectrum}

\begin{document}

\begin{abstract}
	We study the Assouad and lower dimensions of graphs of Weierstrass-type
	functions of the form
	\[
	f(x)=\sum_{n=1}^{\infty}a_n\phi(b_nx+\theta_n),
	\qquad x\in\mathbb R,
	\]
	where $\phi$ is a nonconstant $C^2$ function of period one,
	$a_n,b_n>0$, $(\theta_n)$ is an arbitrary sequence of real numbers,
	$\sum_{n=1}^{\infty}a_n<\infty$, and $b_{n+1}/b_n\to\infty$. We give quantitative conditions under which the graph over every
	nondegenerate compact interval has Assouad dimension two or lower
	dimension one. In particular, when $a_n=b_n^{-\alpha}$, we obtain
	explicit frequency-gap conditions under which the Assouad spectrum
	of the graph is equal to $2$ for $\alpha\leq\vartheta<1$. When the
	logarithmic frequency ratios converge, we also determine the spectrum
	on a nonempty interval below $\alpha$. Together with Bara\'nski's formulas for
	the Hausdorff and box dimensions, these results yield graphs whose lower,
	Hausdorff, upper box, and Assouad dimensions are four distinct numbers.
\end{abstract}
\maketitle

\section{Introduction}\label{sec:intro}

The classical Weierstrass function
\[
W_{a,b}(x)=\sum_{n=0}^{\infty}a^n\cos(2\pi b^n x),
\qquad x\in\mathbb R,
\]
where \(b>1\) and \(1/b<a<1\), is a fundamental example in the theory of
continuous nowhere differentiable functions; see \cite{Hardy1916}.
The box dimension of its graph is
$2+{\log a}/{\log b}$,
and the same value was conjectured to be its Hausdorff dimension.  This
conjecture motivated a long series of investigations, including
\cite{KapParYor1984,MauldinWilliams1986,HuLau1993,
	Hunt1998,BaranskiBaranyRomanowska2014,Shen2018,RenShen2021}.
In these works, the frequencies are of the form \(b^n\), so the ratio of
two consecutive frequencies is fixed.

A different situation arises when the ratios between consecutive frequencies
tend to infinity.  Let \(\phi\in C^2(\mathbb R)\) be a nonconstant
\(1\)-periodic function, and consider
\begin{equation}\label{eq:series}
	f(x)=\sum_{n=1}^{\infty}a_n\phi(b_nx+\theta_n),
	\qquad x\in\mathbb R,
\end{equation}
where \(a_n,b_n>0\) and \((\theta_n)\) is an arbitrary sequence of real
numbers. We assume that $\sum_{n=1}^{\infty}a_n<\infty$ and focus on frequency sequences satisfying ${b_{n+1}}/{b_n}\to\infty$.
The study of such series goes back to Besicovitch and
Ursell~\cite{BesicovitchUrsell1937}, who considered  the case in which
\(\phi\) is a sawtooth function and \(a_n=b_n^{-\alpha}\) for some \(0<\alpha<1\).
For a Lipschitz periodic  function \(\phi\) satisfying a suitable nondegeneracy
condition, Bara\'nski~\cite{Baranski2012} subsequently obtained exact
formulas for the Hausdorff, lower box, and upper box dimensions of the
graph.  His results show how these dimensions depend on both the decay of
the amplitudes and the growth of the frequencies.

For the classical Weierstrass function and its various generalizations,
most previous work on the geometry of the graph has concerned the Hausdorff
and box dimensions.  Much less is known about the Assouad and lower
dimensions, which measure, respectively, the largest and smallest covering complexity occurring at different
locations and scales.  In particular, the exact
Assouad dimension of the graph of \(W_{a,b}\) remains unknown.  Determining
these two dimensions for Weierstrass-type functions with rapidly growing
frequencies is one of the main aims of this paper.

The Assouad dimension has been studied for graphs associated with several
other classes of irregular functions.  Howroyd and
Yu~\cite{HowroydYu2019} proved that the graph of Brownian motion has
Assouad dimension two almost surely, while Feng and
Fraser~\cite{FengFraser2026} investigated the Assouad dimensions of graphs
of typical functions in several function spaces.  For Takagi functions,
Yu~\cite{Yu2020} obtained nontrivial lower bounds using weak tangents, and
Anttila, B\'ar\'any and K\"aenm\"aki
\cite{AnttilaBaranyKaenmaki2024} related the Assouad dimension of the
Takagi graph to the dimensions of its slices.  Jiang~\cite{Jiang2025} proved that the graphs of a family of
generalized Takagi functions, including the classical Takagi
function, have Assouad dimension one.  More recently,
Chrontsios-Garitsis~\cite{ChrontsiosGaritsis2026} obtained an upper bound
strictly below two for the graph of the classical Weierstrass function.
These results indicate that H\"older regularity alone does not determine
the Assouad dimension; the relation between the amplitudes and the frequency
gaps must also be taken into account.

Our first result gives two explicit criteria involving the amplitudes and
frequencies.  One implies that the graph of \(f\) in \eqref{eq:series} has lower dimension one,
while the other implies that it has Assouad dimension two. For this purpose,
define
\begin{equation}\label{eq:budgets}
	D_n=\sum_{j=1}^{n}a_jb_j,
	\qquad
	K_n=\sum_{j=1}^{n}a_jb_j^2,
	\qquad
	T_n=\sum_{j=n+1}^{\infty}a_j.
\end{equation}
These quantities bound, respectively, the first and second derivatives
of the partial sums and the size of the remaining tail of the series in  \eqref{eq:series}. For a nondegenerate compact
interval $I\subset\R$, write the graph of $f$ over $I$ by
\[
\G_f(I)=\graph(f|_I)
=\{(x,f(x)):x\in I\}.
\]

\begin{theorem}\label{thm:block}
Let \(I\subset\mathbb R\) be a nondegenerate compact interval, and let $f$ be as in \eqref{eq:series}.  Assume that
$b_n\to\infty$ and $\sum_{n=1}^\infty a_n<\infty$.
\begin{enumerate}[label=\textup{(\roman*)}]
\item\label{item:lower-general}
If there is a strictly increasing  sequence of indices $(n_k)$ such that
\begin{equation}\label{eq:lower-criterion}
 \frac{D_{n_k-1}}{a_{n_k}b_{n_k}}\to 0,
 \quad
 b_{n_k}T_{n_k}\to 0
 \quad(k\to\infty),
\end{equation}
then the lower dimension $\dimL\G_f(I)=1$.

\item\label{item:assouad-general}
If there is a strictly increasing  sequence of indices $(n_k)$ such that
\begin{equation}\label{eq:assouad-criterion}
 a_{n_k+1}b_{n_k+1}\to\infty,
 \quad
 \frac{D_{n_k-1}}{a_{n_k}b_{n_k}}
 +K_{n_k}a_{n_k+1}+\frac{T_{n_k+1}}{a_{n_k+1}}
 +\frac{D_{n_k}}{a_{n_k+1}b_{n_k+1}}\to 0
 \quad(k\to\infty),
\end{equation}
then the Assouad dimension $\dimA\G_f(I)=2$.
\end{enumerate}
\end{theorem}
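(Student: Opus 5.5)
Both parts will be proved with weak tangents: I will exhibit rescalings of $\G_f(I)$ that converge in the Hausdorff metric to a line segment (for the lower dimension) or to a filled square (for the Assouad dimension). Then I use the standard facts $\dimL F\le\dimL T$ and $\dimA F\ge\dimA T$ for every weak tangent $T$ of a compact set $F$. I also use the bounds $1\le\dimL\G_f(I)$ (the graph is a continuous curve projecting onto $I$) and $\dimA\G_f(I)\le2$. Write $f=S_{n-1}+a_n\phi(b_n\cdot+\theta_n)+R_n$ with $S_{n-1}=\sum_{j<n}$ and $|R_n|\le\|\phi\|_\infty T_n$. Since $\phi$ is nonconstant, $C^2$ and periodic, I fix a point $t_0$ with $\phi'(t_0)\neq0$.

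\textbf{Part (i).} I take $n=n_k$ and choose $x_k\in I$ with $b_{n}x_k+\theta_{n}\equiv t_0 \pmod 1$; this is possible once $b_n|I|>1$. I zoom into the square of side $r_k=\delta/b_{n}$ centred at $(x_k,f(x_k))$, with $\delta>0$ small and fixed, and rescale by $b_n/\delta$ horizontally and by $b_n/(\delta a_n b_n)\cdot(\text{const})$ vertically. To get a genuine similarity I instead use the slope: on $[x_k,x_k+r_k]$ the dominant term changes by about $a_nb_n\phi'(t_0)r_k$. The earlier terms change by at most $\|\phi'\|D_{n-1}r_k=o(a_nb_n r_k)$, and the tail changes by $O(T_n)=o(1/b_n)=o(r_k)$. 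So the graph piece is, up to $o(r_k)$ errors, the graph of $a_n\phi(b_n x+\theta_n)$, which at scale $r_k$ is close to a curve of bounded curvature relative to its slope $s_k=a_nb_n\phi'(t_0)$. If $s_k$ stays bounded, the rescaled image converges, after passing to a subsequence, to a $C^1$ curve (the graph of $\phi$ on a short interval, affinely distorted); choosing $\delta$ small makes it as close to a segment as desired. If $s_k\to\infty$, it is close to a vertical segment, since the horizontal oscillation is $r_k$ and the vertical one is larger. In either case I get weak tangents within Hausdorff distance $\epsilon$ of a segment, and a diagonal argument over $\delta\to0$ yields a segment as a weak tangent. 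Hence $\dimL\le1$. The one delicate point is the normalisation when $s_k\to\infty$. There I should use a square of side comparable to the vertical extent $a_n\delta$ and check that the horizontal width $\delta/b_n$ is $o(a_n\delta)$, which holds precisely because $a_nb_n\to\infty$.

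\textbf{Part (ii).} Take $n=n_k$ and $r=c/b_n$ around a point where $b_n x+\theta_n\equiv t_0$. Over a window of length $r$ the function $g_n=S_n$ has a nearly constant slope $\approx a_nb_n\phi'(t_0)$. The corrections are: the earlier terms contribute slope $D_{n-1}=o(a_nb_n)$, and the curvature error of the whole $S_n$ over a sub-window of length $1/b_{n+1}$ is $K_n b_{n+1}^{-2}$. I will use sub-windows $J$ of length $1/b_{n+1}$ inside the window. On each $J$, after subtracting the affine part $L_J$ of $S_n$, we have $f-L_J=a_{n+1}\phi(b_{n+1}x+\theta_{n+1})+O(K_n b_{n+1}^{-2})+O(T_{n+1})$. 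Divided by the oscillation scale $a_{n+1}$, both errors vanish by \eqref{eq:assouad-criterion}: $K_nb_{n+1}^{-2}/a_{n+1}\le K_na_{n+1}$ because $a_{n+1}b_{n+1}\ge1$ eventually. So over each $J$ the graph is an $o(a_{n+1})$ perturbation of a full period of $\phi$ of vertical amplitude $\asymp a_{n+1}$, tilted by slope $\sigma\approx a_nb_n\phi'(t_0)$. Now I choose the zoom square of side $\rho=a_{n+1}$. It contains about $\rho b_{n+1}=a_{n+1}b_{n+1}\to\infty$ full periods horizontally, while the tilted line rises by $\sigma\rho$. I want the oscillation ($\asymp a_{n+1}=\rho$) to sweep the whole height of the square as the line crosses it. To arrange this I pick the square centred on the line, with horizontal extent limited to $\rho/\sigma$ if $\sigma>1$. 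The number of periods inside is then $b_{n+1}\rho/\max(1,\sigma)$, which tends to infinity because $D_n/(a_{n+1}b_{n+1})\to0$ and $\sigma\lesssim D_n$. The periods become horizontally dense at scale $o(\rho)$, and each one's vertical range $\osc\phi\cdot a_{n+1}$ is a fixed fraction of $\rho$ plus the small drift of the line. I therefore expect the rescaled sets to converge to a set containing a nondegenerate parallelogram. A parallelogram region has Assouad dimension $2$, so $\dimA\G_f(I)=2$.

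The main obstacle I expect is exactly this bookkeeping in (ii): choosing the square so that, simultaneously, the tilt does not carry the curve out before it fills a region, the number of oscillations diverges, and the curvature and tail errors are $o(\rho)$. Each of the four terms in \eqref{eq:assouad-criterion} should correspond to one of these requirements.
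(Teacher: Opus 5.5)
\textbf{Part (ii) has a genuine gap.} You centre at a point where $b_nx+\theta_n\equiv t_0$ with $\phi'(t_0)\neq0$. There the slope of $P_n$ is $\sigma=a_nb_n\phi'(t_0)+O(D_{n-1})$, and $\sigma\to\infty$ necessarily: for $n\ge2$ one has $D_{n-1}\ge a_1b_1>0$, so $D_{n-1}/(a_nb_n)\to0$ forces $a_nb_n\to\infty$ along $(n_k)$.

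With outer scale $\rho=a_{n+1}$, the region your oscillations fill is a parallelogram of height $\asymp\rho$ but width $\asymp\rho/\sigma$, as you note yourself. After the similarity of ratio $1/\rho$ its width is $\asymp1/\sigma\to0$. So what your argument produces in the limit lies in a vertical segment, and it gives nothing beyond $\dimA\ge1$. In covering terms, your construction only yields $N_{1/b_{n+1}}(B(z,\rho)\cap\G_f(I))\gtrsim(\rho b_{n+1})^2/\sigma$, and the loss is not negligible. For $a_n=b_n^{-\alpha}$ the exponent this gives is $2-\log b_n/\log b_{n+1}$, which stays below $2$ when $\log b_{n+1}/\log b_n$ is bounded. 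Having $\rho b_{n+1}/\sigma\to\infty$ periods is not enough; they must occupy a fixed proportion of the width of the square.

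The missing idea in the paper is to centre at a \emph{critical point} of $P_n$. Since $\int_0^1\phi'=0$, $\phi'$ takes values $\ge2\eta$ and $\le-2\eta$ in every period. Then $D_{n-1}=o(a_nb_n)$ makes $P_n'$ change sign within distance $1/b_n$, which gives $c_n$ with $P_n'(c_n)=0$. Taylor's theorem with $\|P_n''\|_\infty\le M_2K_n$ gives $|P_n(x)-P_n(c_n)|\le\frac{M_2}{8}K_na_{n+1}^2=o(a_{n+1})$ for $|x-c_n|\le a_{n+1}/2$. This, not curvature on a single sub-window, is what $K_na_{n+1}\to0$ is for. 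The window then contains $\asymp a_{n+1}b_{n+1}$ separated periods, each with oscillation $\ge\kappa a_{n+1}/2$, so $N_{1/b_{n+1}}\gtrsim(a_{n+1}b_{n+1})^2$.

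Alternatively, your tilted picture can be rescued by shrinking the outer scale instead of moving the centre. Take $R=c\,a_{n+1}/(1+M_1D_n)$ with $c$ small relative to $\kappa$, centred at a point $x_0$ where the $(n+1)$st summand takes its mid-value. Then $P_n$ moves by only a small fraction of $a_{n+1}$ on $|x-x_0|\le R/2$, so the graph over every period there sweeps a vertical interval of length $R$ inside $B(z,R)$. Moreover $Rb_{n+1}\to\infty$ because $D_n=o(a_{n+1}b_{n+1})$. This variant uses only $a_{n+1}b_{n+1}\to\infty$, $T_{n+1}=o(a_{n+1})$ and $D_n=o(a_{n+1}b_{n+1})$.

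\textbf{Part (i)} is fine at the scale $\delta/b_n$, where it is the paper's argument in weak-tangent language. The paper instead uses $R_n=c_0/b_n$, $r_n=4M_0T_n$ and covers the monotone arc of $P_n$ directly. By the remark above, $s_k\to\infty$ always, so the bounded case and the diagonal argument over $\delta$ are unnecessary.

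Your proposed treatment of the ``delicate point'', however, is wrong. A square of side $a_n\delta$ centred at $(x_k,f(x_k))$ contains the graph over all of $|x-x_k|\le a_n\delta/2$, not just over your window of width $\delta/b_n$. That range contains $\asymp a_nb_n\delta\to\infty$ further periods of the $n$th summand, each of vertical range $\kappa a_n$, much larger than the side $a_n\delta$. Nothing in your choice of $x_k$ keeps them out of the square (if $P_{n-1}$ is nearly flat there they fill it), so the rescaled sets need not approach a segment.

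No renormalisation is needed. In the ball of radius $\delta/b_n$ only the steep monotone arc is present, since $|P_n'|\gtrsim a_nb_n$ confines it to $|x-x_k|=o(\delta/b_n)$, and it converges to the vertical diameter. Finally, the inequality $\dimL F\le\dimL T$ for weak tangents needs care, since intersecting with the ball can create boundary artefacts. For a segment through the centre, though, a direct comparison of covering numbers suffices.
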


The two conclusions use different pairs of scales.  For the lower dimension,
\eqref{eq:lower-criterion} ensures that the \(n\)th summand dominates the
preceding partial sum on an interval of length comparable to \(b_n^{-1}\),
while the remaining tail is negligible at this scale.  The graph is
therefore close to a rectifiable arc, which yields lower dimension one.
For the Assouad dimension, \eqref{eq:assouad-criterion} provides a point
near which the partial sum through the \(n\)th term changes very little.
On a small interval around this point, the next summand completes many
oscillations whose vertical size is comparable to the length of the
interval.  This produces the quadratic covering growth required for
Assouad dimension two.  

When \(a_n=b_n^{-\alpha}\), these criteria become
quantitative conditions on the gaps between consecutive frequencies.

\begin{theorem}\label{thm:power}
Let \(I\subset\mathbb R\) be a nondegenerate compact interval. Let \(\phi\in C^2(\mathbb R)\) be a nonconstant
\(1\)-periodic function,  $0<\alpha<1$.  Define
\begin{equation}\label{eq:power-series}
 f_\alpha(x)=\sum_{n=1}^{\infty}b_n^{-\alpha}
 \phi(b_nx+\theta_n), \qquad x\in {\mathbb R},
\end{equation} where $b_n>0$ and $\theta_n\in {\mathbb R}$.
Assume that $b_{n+1}/b_n\to\infty$.  Then $f_\alpha$ is globally  $\alpha$-H\"older continuous.  Moreover:
\begin{enumerate}[label=\textup{(\roman*)}]
\item If $\liminf\limits_{n\to\infty}{b_n}/{b_{n+1}^{\alpha}}=0$,
then  $\dimL\G_{f_\alpha}(I)=1$.

\item If  $\liminf\limits_{n\to\infty} {b_n^{2-\alpha}}/{b_{n+1}^{\alpha}}=0$,
then $\dimA\G_{f_\alpha}(I)=2$ and $\dimL\G_{f_\alpha}(I)=1$.
Moreover, the Assouad spectrum $\dimAs{\vartheta}\G_{f_\alpha}(I)=2$ for every $\alpha\leq\vartheta<1$.
\end{enumerate}
\end{theorem}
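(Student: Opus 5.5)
The plan is to derive everything from \cref{thm:block} with $a_n=b_n^{-\alpha}$, using one elementary fact. Since $b_{n+1}/b_n\to\infty$, we have $b_{n+1}\ge 2b_n$ for all $n\ge n_0$. Hence sums such as $\sum_{j\le n}b_j^{\beta}$ with $\beta>0$ are $\lesssim b_n^{\beta}$, and $\sum_{j>n}b_j^{-\alpha}\lesssim b_{n+1}^{-\alpha}$. The implicit constants depend only on $\alpha,\beta$ and the finitely many initial terms. In this notation,
\[
D_n\lesssim b_n^{1-\alpha},\qquad K_n\lesssim b_n^{2-\alpha},\qquad T_n\lesssim b_{n+1}^{-\alpha}.
\]

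\emph{H\"older continuity.} Given $0<|h|\le 1$, pick $N$ with $b_N\le |h|^{-1}<b_{N+1}$; small $|h|$ and the trivial cases are handled separately. Bound the first $N$ terms by the mean value theorem and the rest by $2\|\phi\|_\infty$:
\[
|f_\alpha(x+h)-f_\alpha(x)|\le \|\phi'\|_\infty|h|\,D_N+2\|\phi\|_\infty T_N\lesssim |h|\,b_N^{1-\alpha}+b_{N+1}^{-\alpha}\lesssim |h|^{\alpha}.
\]
The last step uses $b_N\le|h|^{-1}<b_{N+1}$.

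\emph{Part (i).} Choose $n_k$ along which $b_{n_k}/b_{n_k+1}^{\alpha}\to0$. Then
\[
\frac{D_{n_k-1}}{a_{n_k}b_{n_k}}\lesssim\Bigl(\frac{b_{n_k-1}}{b_{n_k}}\Bigr)^{1-\alpha}\to0,\qquad b_{n_k}T_{n_k}\lesssim \frac{b_{n_k}}{b_{n_k+1}^{\alpha}}\to0,
\]
so \cref{thm:block}\ref{item:lower-general} applies.

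\emph{Part (ii).} Choose $n_k$ with $b_{n_k}^{2-\alpha}/b_{n_k+1}^{\alpha}\to0$. Since $b_n\to\infty$, eventually $b_n\le b_n^{2-\alpha}$, so the hypothesis of (i) also holds and $\dimL=1$. For \eqref{eq:assouad-criterion}, check each quantity in turn:
\begin{itemize}[label=--]
\item $a_{n_k+1}b_{n_k+1}=b_{n_k+1}^{1-\alpha}\to\infty$;
\item $D_{n_k-1}/(a_{n_k}b_{n_k})\to0$, exactly as in part (i);
\item $K_{n_k}a_{n_k+1}\lesssim b_{n_k}^{2-\alpha}b_{n_k+1}^{-\alpha}\to0$;
\item $T_{n_k+1}/a_{n_k+1}\lesssim (b_{n_k+1}/b_{n_k+2})^{\alpha}\to0$;
\item $D_{n_k}/(a_{n_k+1}b_{n_k+1})\lesssim (b_{n_k}/b_{n_k+1})^{1-\alpha}\to0$.
\end{itemize}
So \cref{thm:block}\ref{item:assouad-general} gives $\dimA=2$.

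\emph{Assouad spectrum.} Since $\dimAs{\vartheta}\le\dimA=2$, only the lower bound is needed. I would rerun the construction behind \cref{thm:block}\ref{item:assouad-general} with explicit scales. Write $n=n_k$ and $S_n=\sum_{j\le n}a_j\phi(b_jx+\theta_j)$.
\begin{enumerate}[label=\textup{(\arabic*)}]
\item Take a point $x_0\in I$ near a critical point of the dominant term $a_n\phi(b_nx+\theta_n)$. This keeps $|S_n'(x_0)|\lesssim D_{n-1}+(\text{small})$; this is what the first summand in \eqref{eq:assouad-criterion} controls.
\item Take $R= c\,a_{n+1}=c\,b_{n+1}^{-\alpha}$. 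On $[x_0,x_0+R]$, the variation of $S_n$ is at most $|S_n'(x_0)|R+K_nR^2=o(R)$, using $K_na_{n+1}\to0$ and $D_n/(a_{n+1}b_{n+1})\to0$. The tail satisfies $2\|\phi\|_\infty T_{n+1}=o(R)$.
\item Hence, up to an $o(R)$ error, the graph over this interval is the graph of $a_{n+1}\phi(b_{n+1}x+\theta_{n+1})$. That graph makes about $Rb_{n+1}\to\infty$ oscillations of height $\asymp R$, with slopes of size $b_{n+1}^{1-\alpha}$ on a fixed proportion of each period.
\item Now take $r=R^{1/\vartheta}=b_{n+1}^{-\alpha/\vartheta}$. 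Since $\alpha\le\vartheta<1$, we get $b_{n+1}^{-1}\le r< R$ and $r/R\to0$.
\item In each $r$-column meeting a steep part, the graph spans vertical distance $\gtrsim\min(R,\,r\,b_{n+1}^{1-\alpha})\gtrsim R$, because $r\ge b_{n+1}^{-1}$ gives $r\,b_{n+1}^{1-\alpha}\ge b_{n+1}^{-\alpha}$. There are $\asymp R/r$ such columns, so $N_r\bigl(B((x_0,f(x_0)),R)\cap\G\bigr)\gtrsim (R/r)^2$.
\end{enumerate}
This yields $\dimAs{\vartheta}\ge2$.

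The main obstacle is step (1) together with the $o(R)$ control in step (2). We need a single point $x_0$ at which the first derivative of the partial sum is small enough that $|S_n'(x_0)|R=o(R)$, and the naive bound $D_n$ is too large for this. I would first try to extract a quantitative lemma from the proof of \cref{thm:block}\ref{item:assouad-general} giving $x_0$ and the flatness estimate. If that is not available, I would use a direct argument: $\phi'$ vanishes somewhere in each period, so the dominant term's derivative vanishes at points spaced $b_n^{-1}$ apart. Near such a point, $|S_n'|\lesssim D_{n-1}+K_n|x-x_0|$, and both contributions are $o(1)$ times $R^{-1}\cdot R$ on the relevant window. Finally, the errors must be uniform so that the column count in step (5) survives the $o(R)$ perturbation; the ratio of the error to $R$, not to $r$, is what matters there.
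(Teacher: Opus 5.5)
Your H\"older estimate and your verification of the hypotheses of \cref{thm:block} for parts (i) and (ii) are the paper's argument; the paper records your $\lesssim$ bounds as the asymptotics of \cref{lem:dominance}. The gap is in the spectrum part, at steps (1)--(2). You flag this yourself as the main obstacle, but neither of your remedies closes it.

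At a critical point $x_0$ of the dominant term you only know $|S_n'(x_0)|=|S_{n-1}'(x_0)|\le\|\phi'\|_\infty D_{n-1}$, and $D_{n-1}\ge b_{n-1}^{1-\alpha}\to\infty$. So the drift $|S_n'(x_0)|R$ over your window is not $o(R)$, and in general not even $O(R)$. The hypothesis $D_n/(a_{n+1}b_{n+1})\to0$ that you invoke does not help. It says $D_nb_{n+1}^{-1}=o(a_{n+1})$, so the drift over \emph{one period} of the $(n+1)$st term is negligible; that is what makes the per-period bound \eqref{eq:osc-lower} work. It says nothing about a window of length $R\asymp a_{n+1}\gg b_{n+1}^{-1}$. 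Your fallback has the same flaw, because it needs $D_{n-1}=o(1)$, which is false. Without flatness, the graph over the window is a steep tilted strip, not contained in a ball of radius comparable to $R$, and the quadratic count rests on that containment.

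The missing idea is to find an \emph{exact} zero of $S_n'$ (the paper's $P_n'$), rather than a zero of the dominant term's derivative.
\begin{enumerate}
\item Choose $t_\pm$ with $\pm\phi'(t_\pm)\ge2\eta$, and points $x_{n,\pm}$ less than $b_n^{-1}$ apart with $b_nx_{n,\pm}+\theta_n\equiv t_\pm\pmod 1$.
\item Since $D_{n-1}=o(a_nb_n)$, you get $S_n'(x_{n,+})>0>S_n'(x_{n,-})$. The intermediate value theorem then gives $c_n$ with $S_n'(c_n)=0$.
\item Taylor's theorem and $K_na_{n+1}\to0$ give $\sup_{|x-c_n|\le R/2}|S_n(x)-S_n(c_n)|\le\tfrac{M_2}{8}K_nR^2=o(R)$.
\end{enumerate}
This is exactly the construction inside the proof of \cref{thm:block}\ref{item:assouad-general}, and the paper's spectrum proof simply reuses it. Your first remedy points the right way, but the proposal never supplies the argument.

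Two bookkeeping points also need fixing.
\begin{itemize}
\item The graph over the window has vertical extent $\asymp a_{n+1}$. You therefore need an outer radius $S=CR$ with $C$ large and inner scale $r=S^{1/\vartheta}$. The paper's choice $C\ge1$ also gives $r\ge b_{n+1}^{-1}$ at $\vartheta=\alpha$.
\item The columns must be separated, for instance by keeping every third one, so that no set of diameter $r$ is counted twice.
\end{itemize}
With these changes, your steps (3)--(5) reproduce the paper's count: at least a constant times $R/r$ separated columns, each requiring at least a constant times $R/r$ sets.
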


The value $\vartheta=\alpha$ is determined by the amplitude and period of
each summand.  The $(n+1)$st summand has amplitude
$b_{n+1}^{-\alpha}$ and period $b_{n+1}^{-1}$, and these are precisely
the two scales related by $r=R^{1/\vartheta}$ when $\vartheta=\alpha$.
Thus \cref{thm:power} shows that the spectrum attains its maximal value
from $\vartheta=\alpha$ onward.  Below this value, the general estimate
for graphs of $\alpha$-H\"older functions gives
\[
\dimAs{\vartheta}\G_{f_\alpha}(I)
\leq\frac{2-\alpha-\vartheta}{1-\vartheta}<2.
\]
See \cite[Theorem~1.1]{ChrontsiosGaritsisTyson2026}.  Under a regularity
assumption on the logarithmic growth of the frequencies, the next theorem
shows that this upper bound is attained on a nonempty interval below
$\alpha$.

\begin{theorem}\label{thm:subcritical-spectrum}
 Under the assumptions of \cref{thm:power},  suppose that
	\begin{equation*} 
		\frac{\log b_{n+1}}{\log b_n}
		\longrightarrow\beta
		\qquad\text{for some}\qquad
		\beta>\frac{2-\alpha}{\alpha}.
	\end{equation*}
	Then $\dim_A^\vartheta\G_{f_\alpha}(I)=	{(2-\alpha-\vartheta)}/{(1-\vartheta)}$
	for ${(2-\alpha)}/{\beta}<\vartheta<\alpha$. 	Consequently,
	\[
	\dim_A^\vartheta\G_{f_\alpha}(I)
	=
	\begin{cases}
		\dfrac{2-\alpha-\vartheta}{1-\vartheta},
		&
		\dfrac{2-\alpha}{\beta}<\vartheta<\alpha,\\[2ex]
		2,
		&
		\alpha\leq\vartheta<1.
	\end{cases}
	\]
\end{theorem}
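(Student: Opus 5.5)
The upper bound needs no new work: $f_\alpha$ is globally $\alpha$-H\"older by \cref{thm:power}, so \cite[Theorem~1.1]{ChrontsiosGaritsisTyson2026} gives $\dimAs{\vartheta}\G_{f_\alpha}(I)\le(2-\alpha-\vartheta)/(1-\vartheta)$ for every $\vartheta<\alpha$. Also, since $\beta>(2-\alpha)/\alpha$, we have $b_n^{2-\alpha}/b_{n+1}^{\alpha}=b_n^{2-\alpha-\alpha\beta+o(1)}\to0$, so \cref{thm:power}(ii) supplies the value $2$ on $[\alpha,1)$. It therefore remains to prove the lower bound
\[
\dimAs{\vartheta}\G_{f_\alpha}(I)\ge\frac{2-\alpha-\vartheta}{1-\vartheta}
\qquad\text{for }\frac{2-\alpha}{\beta}<\vartheta<\alpha .
\]
For this I would exhibit, for each large $n$, a ball of radius $R$ and a scale $r=R^{1/\vartheta}$ such that covering $\G_{f_\alpha}(I)\cap B(z,R)$ needs at least $(R/r)^{(2-\alpha-\vartheta)/(1-\vartheta)-\varepsilon}$ balls of radius $r$.

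The construction follows the Assouad part of \cref{thm:block}. Set $b=b_{n+1}$ and take $r$ in the range where the $(n+1)$st summand is still resolved, namely $r=b^{-1}$. Then $R=r^{\vartheta}=b^{-\vartheta}\gg b^{-\alpha}$, which is the oscillation amplitude of the $(n+1)$st summand, since $\vartheta<\alpha$. I would choose a point $x_0$ where the derivative of the partial sum $S_n=\sum_{j\le n}b_j^{-\alpha}\phi(b_jx+\theta_j)$ is small and work on the horizontal window $J=[x_0,x_0+L]$, with $L\le R$ chosen so that the drift of $S_n$ over $J$ is at most $R$; this keeps $\G(J)\subset B(z,2R)$. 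On $J$ the $(n+1)$st term makes about $Lb$ oscillations of height $b^{-\alpha}$. Each period of length $b^{-1}=r$ needs about $b^{-\alpha}/r=b^{1-\alpha}$ balls of radius $r$, and the tail $\sum_{j>n+1}$ is $O(b_{n+2}^{-\alpha})\ll r$, so it does not matter. Hence the count is about $Lb\cdot b^{1-\alpha}=Lb^{2-\alpha}$.

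If the full window $L=R$ is admissible, the count is $b^{2-\alpha-\vartheta}$. Since $R/r=b^{1-\vartheta}$, this corresponds to the exponent $(2-\alpha-\vartheta)/(1-\vartheta)$ exactly, which is the claimed value. So everything reduces to showing that $S_n$ drifts by at most $R$ over an interval of length $R=b_{n+1}^{-\vartheta}$. I would take $x_0$ to be a critical point of the dominant term $b_n^{-\alpha}\phi(b_nx+\theta_n)$, so that this term varies by $\lesssim b_n^{2-\alpha}R^2$ over $J$. The lower-order terms contribute at most $D_{n-1}R\lesssim b_{n-1}^{1-\alpha}R$, which is $o(R)$. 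The condition $b_n^{2-\alpha}R\lesssim1$ reads $b_n^{2-\alpha}\le b_{n+1}^{\vartheta}=b_n^{\beta\vartheta+o(1)}$, and this holds precisely when $\vartheta>(2-\alpha)/\beta$. That is exactly where the hypothesis enters.

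The main obstacle I expect is the $o(1)$ in the exponent from $\log b_{n+1}/\log b_n\to\beta$. It forces working with $\vartheta$ strictly inside the interval and absorbing factors $b_n^{o(1)}$ into an $\varepsilon$-loss in the exponent, which is harmless since the bound is attained as $\varepsilon\to0$. I also have to check that $x_0$ can be found inside $I$; this holds because $b_n^{-1}\ll|I|$. A second point is rigour in the covering lower bound: over each period of the $(n+1)$st term, the graph crosses a vertical band of height $\gtrsim b^{-\alpha}$ within horizontal width $r$. This is because $\phi$ is nonconstant and the other terms are perturbations: $S_n$ varies by $\ll b^{-\alpha}$ per period thanks to the small-derivative choice, and the tail is $\ll r$. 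Counting disjoint $r$-boxes then gives the bound, as in the proof of \cref{thm:block}(ii).
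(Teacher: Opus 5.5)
Your overall scheme is the paper's. You use outer scale $R=b_{n+1}^{-\vartheta}$ around a point where the partial sum through index $n$ is flat, and inner scale $r\asymp b_{n+1}^{-1}$. You then count about $R/r$ separated periods of the $(n+1)$st summand, each forcing $\gtrsim b_{n+1}^{-\alpha}/r$ covering sets, for a total of $b_{n+1}^{2-\alpha-\vartheta}=(R/r)^{(2-\alpha-\vartheta)/(1-\vartheta)}$.

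\textbf{The gap.} The localization step fails as you implement it. You take $x_0$ to be a critical point of the $n$th summand alone, and assert that the lower-order terms drift by at most $D_{n-1}R\lesssim b_{n-1}^{1-\alpha}R=o(R)$. This is false: $D_{n-1}\asymp b_{n-1}^{1-\alpha}\to\infty$, so $b_{n-1}^{1-\alpha}R\gg R$. Nor is it merely a lossy bound. By Taylor's theorem, on $J$ one has $P_{n-1}(x)-P_{n-1}(x_0)=P_{n-1}'(x_0)(x-x_0)+O(K_{n-1}R^2)$ with $K_{n-1}R^2=o(R)$. At a critical point of the $n$th term chosen without regard to the earlier terms, $|P_{n-1}'(x_0)|$ is generically of order $b_{n-1}^{1-\alpha}$. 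So the graph over a window of length $R$ has vertical extent of order $b_{n-1}^{1-\alpha}R$ and leaves $B(z,2R)$. Shrinking $L$ to restore containment costs a factor $b_{n-1}^{1-\alpha}=b_{n+1}^{(1-\alpha)/\beta^2+o(1)}$. This is a fixed power of $b_{n+1}$, not an $\varepsilon$-loss, and it yields a strictly smaller exponent than $(2-\alpha-\vartheta)/(1-\vartheta)$.

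\textbf{The repair.} This is exactly what the paper does: take $x_0=c_n$ to be a genuine zero of the full derivative $P_n'$. Pick $t_\pm$ with $\pm\phi'(t_\pm)\geq 2\eta$ and the corresponding points $x_{n,\pm}$ within one period of the $n$th summand. Since $D_{n-1}=(1+o(1))b_{n-1}^{1-\alpha}=o(b_n^{1-\alpha})$, you get $P_n'(x_{n,+})\geq 2\eta b_n^{1-\alpha}-M_1D_{n-1}>0$ and $P_n'(x_{n,-})<0$. The intermediate value theorem then gives $c_n$. Taylor's theorem with $\|P_n''\|_\infty\leq M_2K_n$ bounds the drift of the whole partial sum, on the interval of length $R$ centred at $c_n$, by $\frac{M_2}{8}K_nR^2$. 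Moreover $K_nR=(1+o(1))b_n^{2-\alpha}b_{n+1}^{-\vartheta}\to 0$ precisely because $\vartheta\beta>2-\alpha$.

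This also disposes of your anticipated main obstacle. For $\vartheta$ strictly inside $((2-\alpha)/\beta,\alpha)$ the hypothesis gives $K_nR\to 0$ outright, so no $\varepsilon$-loss is needed. The only remaining bookkeeping is a harmless constant: take the ball radius to be $CR$ and the inner scale $(CR)^{1/\vartheta}\asymp b_{n+1}^{-1}$.
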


To compare these results with the Hausdorff and box dimensions, we recall
the following specialization of Bara\'nski's formulas.  The nondegeneracy
assumption in his theorem is satisfied here, since every nonconstant
\(C^1\) periodic function has an interval on which its derivative has
constant sign and is bounded away from zero.

\begin{theorem}[\cite{Baranski2012}]\label{thm:global}
Under the assumptions of \cref{thm:power}, put
	\[
	\beta=\limsup_{n\to\infty}
	\frac{\log b_{n+1}}{\log b_n}\in[1,\infty].
	\]
	Then we have 
	\begin{equation*}
		\dimH\G_{f_\alpha}(I)=\ldimB\G_{f_\alpha}(I)
		=1+\frac{1-\alpha}{1-\alpha+\alpha\beta},
		\quad
		\udimB\G_{f_\alpha}(I)=2-\alpha,
	\end{equation*}
	where the fraction is interpreted as zero when $\beta=\infty$.
\end{theorem}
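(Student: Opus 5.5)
This statement is a specialization of the main theorem of \cite{Baranski2012}, so the plan is to check that $f_\alpha$ satisfies his hypotheses and then evaluate his general dimension formulas when $a_n=b_n^{-\alpha}$. Nothing new about Weierstrass-type functions has to be proved; the work is a reduction followed by a computation with logarithms.

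\textbf{Step 1: reduction to Bara\'nski's setting.} Bara\'nski states his result for graphs over a fixed interval such as $[0,1]$. For $I=[c,c+\ell]$, the substitution $x=c+\ell t$ gives
\[
f_\alpha(c+\ell t)=\sum_n b_n^{-\alpha}\phi(b_n\ell t+b_nc+\theta_n).
\]
Put $b_n'=\ell b_n$ and $\theta_n'=b_nc+\theta_n$. Then $b_n^{-\alpha}=\ell^{\alpha}(b_n')^{-\alpha}$, so the new series is a constant multiple of a series of the same form. Moreover $b_{n+1}'/b_n'\to\infty$, and $\log b'_{n+1}/\log b'_n$ has the same $\limsup$ $\beta$, since $\log b_n\to\infty$. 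The map $(t,y)\mapsto(c+\ell t,\,y)$ is bi-Lipschitz, and multiplying the function by $\ell^\alpha$ is a bi-Lipschitz map of the graph. Since all the dimensions involved are bi-Lipschitz invariant, it suffices to treat $I=[0,1]$.

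\textbf{Step 2: hypotheses of Bara\'nski's theorem.}
\begin{enumerate}[label=\textup{(\alph*)}]
\item $\phi$ is $C^2$, hence Lipschitz.
\item Nondegeneracy: $\phi$ is nonconstant and $1$-periodic, so $\phi'$ is not identically zero. By continuity, $|\phi'|\ge c>0$ with constant sign on some subinterval $J$ of a period.
\item Amplitudes and frequencies: $b_{n+1}/b_n\to\infty$ gives lacunarity, and $a_n=b_n^{-\alpha}$ with $0<\alpha<1$ gives $a_n\to0$, $a_nb_n\to\infty$ and $\sum_n a_n<\infty$.
\end{enumerate}
I would also check whether his growth conditions are stated in terms of $\log b_{n+1}/\log b_n$ or of $\log(b_{n+1}/b_n)$, and translate accordingly.

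\textbf{Step 3: evaluating the formulas.} Bara\'nski's Hausdorff and lower box formula has the form $1+\liminf_n(\text{a ratio of logarithms of } a_n,a_{n+1},b_n,b_{n+1})$. His upper box formula is $1+\limsup_n$ of a similar expression. Substituting $\log a_n=-\alpha\log b_n$ and dividing by $\log b_n$, the ratio should reduce to $g(\beta_n)$, up to $o(1)$ terms, where
\[
g(s)=\frac{1-\alpha}{1-\alpha+\alpha s},\qquad \beta_n=\frac{\log b_{n+1}}{\log b_n}.
\]
Since $g$ is continuous and decreasing on $[1,\infty]$, with $g(\infty)=0$, we get
\[
\liminf_n g(\beta_n)=g\Bigl(\limsup_n\beta_n\Bigr)=g(\beta).
\]
This yields the stated value $1+g(\beta)$ for $\dimH$ and $\ldimB$, including the convention that the fraction is zero when $\beta=\infty$.

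For $\udimB$ the relevant expression is evaluated at the scale $b_n^{-1}$ alone, where the $n$th term dominates. It gives $2-\alpha$ independently of $\beta$. A direct check is also available:
\begin{itemize}
\item Upper bound: the $\alpha$-H\"older continuity from \cref{thm:power} gives $\udimB\le 2-\alpha$.
\item Lower bound: at scale $r=b_n^{-1}$, the $n$th summand has derivative of size $b_n^{1-\alpha}$ on a fixed proportion of each period (on the translates of $J$). The partial sum $D_{n-1}$ is $o(b_n^{1-\alpha})$ by lacunarity, and the tail is $o(b_n^{-\alpha})$. So each $r$-column of the graph needs at least about $b_n^{1-\alpha}$ boxes, giving $N_r\gtrsim r^{-(2-\alpha)}$.
\end{itemize}

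\textbf{Main obstacle.} The step needing most care is Step 3: matching Bara\'nski's exact normalization (his $\liminf$ and $\limsup$ expressions and any auxiliary sequences) to the quantities here, and justifying that the $\liminf$ of $g(\beta_n)$ equals $g$ of the $\limsup$. The latter is a routine monotonicity argument, including the case $\beta=\infty$. The reduction in Steps 1 and 2 is routine.
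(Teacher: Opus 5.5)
Your proposal is correct and follows the same route as the paper. The paper also just specializes Bara\'nski's formulas, after observing that a nonconstant $C^1$ periodic $\phi$ has an interval on which $\phi'$ has constant sign and is bounded away from zero. The rescaling to $[0,1]$, the hypothesis check (including, if his amplitude condition is phrased that way, $a_{n+1}/a_n=(b_n/b_{n+1})^{\alpha}\to0$), and the monotonicity identity $\liminf_n g(\beta_n)=g(\limsup_n\beta_n)$ are the bookkeeping this specialization needs. Your direct check of $\udimB\G_{f_\alpha}(I)=2-\alpha$ is a valid self-contained supplement that the paper does not give; it combines the H\"older upper bound with a per-period oscillation count at scale $b_n^{-1}$, using $D_{n-1}=o(b_n^{1-\alpha})$ and $T_n=o(b_n^{-\alpha})$. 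As in the paper, though, the Hausdorff and lower box values rest entirely on Bara\'nski's theorem.
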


 Combining \cref{thm:power} with  \cref{thm:global} yields the following separation of dimensions.

\begin{corollary}\label{cor:four}
Under the assumptions of \cref{thm:power}, suppose that
\[
\beta:=\limsup_{n\to\infty}\frac{\log b_{n+1}}{\log b_n}<\infty
\quad\text{and}\quad
\liminf_{n\to\infty}\frac{b_n^{2-\alpha}}{b_{n+1}^{\alpha}}=0.
\]
Then
\begin{align*} 
 \dimL\G_{f_\alpha}(I)=1
 &<\dimH\G_{f_\alpha}(I)=\ldimB\G_{f_\alpha}(I) \\
 &=1+\frac{1-\alpha}{1-\alpha+\alpha\beta}
 <\udimB\G_{f_\alpha}(I)=2-\alpha
 <\dimA\G_{f_\alpha}(I)=2.
\end{align*}
Moreover, the Assouad spectrum 
\(\dimAs{\vartheta}\G_{f_\alpha}(I)=2\) for \(\alpha\leq\vartheta<1\).
\end{corollary}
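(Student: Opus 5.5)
The corollary follows by combining \cref{thm:power} with \cref{thm:global} and then verifying the strict inequalities. The hypothesis $\liminf_{n\to\infty} b_n^{2-\alpha}/b_{n+1}^{\alpha}=0$ is exactly condition (ii) of \cref{thm:power}. That theorem therefore gives $\dimL\G_{f_\alpha}(I)=1$, $\dimA\G_{f_\alpha}(I)=2$, and $\dimAs{\vartheta}\G_{f_\alpha}(I)=2$ for $\alpha\le\vartheta<1$. Since $b_{n+1}/b_n\to\infty$, the assumptions of \cref{thm:global} hold as well, which yields the stated values of $\dimH$, $\ldimB$ and $\udimB$. It remains to check the three strict inequalities.

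To see that $1<1+\frac{1-\alpha}{1-\alpha+\alpha\beta}$, note that $0<\alpha<1$ and $\beta<\infty$. The fraction therefore has positive numerator and finite positive denominator, so it is strictly positive.

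The inequality $2-\alpha<2$ is immediate from $\alpha>0$.

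For the middle inequality, we need $\frac{1-\alpha}{1-\alpha+\alpha\beta}<1-\alpha$. This is equivalent to $1<1-\alpha+\alpha\beta$, that is, to $\alpha(\beta-1)>0$, so it reduces to showing $\beta>1$. The main point of the proof is to extract this from the hypotheses. I would use the gap condition itself. Along a subsequence we have $b_n^{2-\alpha}\le b_{n+1}^{\alpha}$ eventually, and hence
\[
\frac{\log b_{n+1}}{\log b_n}\ \ge\ \frac{2-\alpha}{\alpha}
\]
along that subsequence, using that $b_n\to\infty$ so that $\log b_n>0$. Consequently $\beta\ge (2-\alpha)/\alpha>1$, because $2-\alpha>\alpha$ when $\alpha<1$. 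With $\beta>1$ the middle inequality holds, and assembling the displayed chain completes the proof.
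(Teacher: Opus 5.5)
Your proposal is correct and follows the paper's proof: read off the equalities from \cref{thm:power} and \cref{thm:global}, then reduce the strict inequalities to $1<\beta<\infty$, with $\beta\geq(2-\alpha)/\alpha>1$ coming from the gap condition. The only difference is that you derive $\beta\geq(2-\alpha)/\alpha$ directly along the subsequence where $b_n^{2-\alpha}/b_{n+1}^{\alpha}\to0$, whereas the paper argues by contradiction, showing that $b_n^{2-\alpha}/b_{n+1}^{\alpha}\geq b_n^{\alpha\varepsilon}\to\infty$ if $\beta<(2-\alpha)/\alpha$; the two arguments are equivalent.
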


The rest of this paper is organized as follows. \Cref{sec:oscillation} states the definitions of Assouad and lower dimensions and proves \cref{thm:block}, while
\cref{sec:local} proves \cref{thm:power,thm:subcritical-spectrum,cor:four}.
Final remarks and open problems are
given in \cref{sec:questions}.

\section{Proof of Theorem 1.1}
\label{sec:oscillation}

For a nonempty bounded set \(E\subset\mathbb R^2\), let \(N_r(E)\)
denote the least number of sets of diameter at most \(r\) required to
cover \(E\).  The Assouad dimension of \(E\) is
\begin{align*}
	\dimA E=\inf\biggl\{s\geq0:\;&
	\text{there exist }C,\rho>0\text{ so that }
	N_r\bigl(B(z,R)\cap E\bigr)
	\leq C\left(\frac{R}{r}\right)^s\\
	&\text{for every }z\in E
	\text{ and }0<r<R<\rho\biggr\}.
\end{align*}
The lower dimension of \(E\) is
\begin{align*}
	\dimL E=\sup\biggl\{s\geq0:\;&
	\text{there exist }c,\rho>0\text{ so that }
	N_r\bigl(B(z,R)\cap E\bigr)
	\geq c\left(\frac{R}{r}\right)^s\\
	&\text{for every }z\in E
	\text{ and }0<r<R<\rho\biggr\}.
\end{align*}

Hausdorff and box dimensions describe the scaling of a set as a
whole.  The Assouad and lower dimensions instead measure,
respectively, the largest and smallest covering complexity of
$B(z,R)\cap E$ at a finer scale $r$, uniformly over all locations
and scales.  Thus a sequence with large covering numbers can provide a lower
bound for \(\dimA E\), while a sequence with small covering numbers can
provide an upper bound for \(\dimL E\).  See
\cite{Assouad1983,Fraser2014,Fraser2021} for further background.

The Assouad spectrum, introduced by Fraser and
Yu~\cite{FraserYu2018}, restricts the two scales by
\(r=R^{1/\vartheta}\).  For \(0<\vartheta<1\), it is defined by
\begin{align*}
	\dimAs{\vartheta}E=\inf\biggl\{s\geq0:\;&
	\text{there exist }C,\rho>0\text{ so that }
	N_{R^{1/\vartheta}}\bigl(B(z,R)\cap E\bigr)
	\leq
	C\left(\frac{R}{R^{1/\vartheta}}\right)^s\notag\\
	&\text{for every }z\in E
	\text{ and }0<R<\rho\biggr\}.
\end{align*}
The Assouad spectrum measures the same covering behaviour under the
prescribed relation \(r=R^{1/\vartheta}\); see
\cite{FraserEtAl2019}.  

Set
\begin{equation*} 
 P_n(x)=\sum_{j=1}^{n}a_j\phi(b_jx+\theta_j),
 \qquad H_n(x)=f(x)-P_n(x).
\end{equation*}
Since \(\phi\in C^2(\mathbb R)\) is a nonconstant
\(1\)-periodic function, we use the constants
\[
 M_0=\|\phi\|_\infty,
 \quad M_1=\|\phi'\|_\infty,
 \quad M_2=\|\phi''\|_\infty,
 \quad \kappa=\max \phi- \min \phi>0.
\]
By \eqref{eq:budgets}, we have  
\begin{equation}\label{eq:basic-norms}
 \|P_n'\|_\infty\leq M_1D_n,
 \quad
 \|P_n''\|_\infty\leq M_2K_n,
 \quad
 \|H_n\|_\infty\leq M_0T_n.
\end{equation}

\begin{lemma}\label{lem:three-block}
Let $I\subset\R$ be an interval of length $\ell$.  For every $n\geq0$,
\begin{equation}\label{eq:osc-upper}
 \osc_I f\leq M_1D_n\ell+2M_0T_n,
\end{equation}
where $D_0=0$.  If $I$ contains a full period of
$x\mapsto\phi(b_{n+1}x+\theta_{n+1})$, then
\begin{equation}\label{eq:osc-lower}
 \osc_I f\geq \kappa a_{n+1}
 -\frac{M_1D_n}{b_{n+1}}-2M_0T_{n+1}.
\end{equation}
\end{lemma}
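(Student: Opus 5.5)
The plan is to split $f=P_n+H_n$ and use the elementary subadditivity $\osc_I(g+h)\leq\osc_I g+\osc_I h$, together with its reverse form $\osc_I(g+h)\geq\osc_I g-\osc_I h$. Both bounds then follow from the norm estimates in \eqref{eq:basic-norms}.

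For \eqref{eq:osc-upper}, we bound each piece directly. By the mean value theorem, $\osc_I P_n\leq\|P_n'\|_\infty\ell\leq M_1D_n\ell$. For the tail, $\osc_I H_n\leq 2\|H_n\|_\infty\leq 2M_0T_n$. Adding these gives the claim. When $n=0$ we have $P_0\equiv0$ and $H_0=f$, and the same estimate holds with $D_0=0$.

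For \eqref{eq:osc-lower}, we write $f=P_n+g_{n+1}+H_{n+1}$ with $g_{n+1}(x)=a_{n+1}\phi(b_{n+1}x+\theta_{n+1})$. The key point is to avoid losing the term $M_1D_n\ell$ with the full length $\ell$, which could be large. To do this, we choose a subinterval $J\subset I$ of length exactly one period $1/b_{n+1}$; the hypothesis that $I$ contains a full period guarantees such a $J$ exists. On $J$, $g_{n+1}$ attains both its maximum and its minimum, so $\osc_J g_{n+1}=\kappa a_{n+1}$. Since oscillation is monotone in the interval, we get
\[
\osc_I f\geq\osc_J f\geq \osc_J g_{n+1}-\osc_J P_n-\osc_J H_{n+1}\geq \kappa a_{n+1}-\frac{M_1D_n}{b_{n+1}}-2M_0T_{n+1}.
\]
Here the middle terms are bounded exactly as in the first part, now with $\ell=1/b_{n+1}$.

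There is no real obstacle in this argument. The only point needing care is the restriction to the one-period subinterval $J$, so that the derivative bound on $P_n$ contributes $M_1D_n/b_{n+1}$ rather than $M_1D_n\ell$.
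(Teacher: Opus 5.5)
Your proof is correct and is essentially the paper's argument: both split $f$ into $P_n$, the $(n+1)$st summand, and $H_{n+1}$, and work within a single period so that $P_n$ contributes only $M_1D_n/b_{n+1}$. The only difference is packaging. The paper bounds $|f(u)-f(v)|$ at two points $u,v$ of one period where the $(n+1)$st summand attains its maximum and minimum, whereas you apply the reverse subadditivity of $\osc$ on a one-period subinterval $J$.
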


\begin{proof}
For $x,y\in I$, the mean value theorem and \eqref{eq:basic-norms} give
\[
 |P_n(x)-P_n(y)|\leq M_1D_n\ell,
 \qquad |H_n(x)-H_n(y)|\leq2M_0T_n,
\]
which proves \eqref{eq:osc-upper}.

Choose $s_+,s_-\in[0,1]$ with
$\phi(s_+)=\max\phi$ and $\phi(s_-)=\min\phi$.  A full period of
$x\mapsto\phi(b_{n+1}x+\theta_{n+1})$ inside $I$
contains points $u,v$ satisfying
\[
 b_{n+1}u+\theta_{n+1}\equiv s_+\pmod1,
 \qquad
 b_{n+1}v+\theta_{n+1}\equiv s_-\pmod1.
\]
They may be chosen in the same period, so $|u-v|\leq b_{n+1}^{-1}$.
Writing 
$$f(x)=P_n(x)+a_{n+1}\phi(b_{n+1}x+\theta_{n+1})+H_{n+1}(x),$$
we obtain
\begin{align*}
 |f(u)-f(v)|
 &\geq \kappa a_{n+1}-|P_n(u)-P_n(v)|
      -|H_{n+1}(u)-H_{n+1}(v)|\\
 &\geq \kappa a_{n+1}-\frac{M_1D_n}{b_{n+1}}
      -2M_0T_{n+1}.
\end{align*}
Taking the supremum over pairs in $I$ proves \eqref{eq:osc-lower}.
\end{proof}

\bigskip

\begin{proof}[Proof of \cref{thm:block}]
	We first prove part~\ref{item:lower-general}.  Choose a strictly
	increasing  sequence of indices $(n_k)_{k\geq1}$ along which both limits in
	\eqref{eq:lower-criterion} hold.  For each $k$, set $n=n_k$.  All
	limits in this part of the proof are taken as $k\to\infty$.
	
	Since $\phi$ is nonconstant, $\phi'$ is nonzero at some point.  By
	continuity, there exist a closed interval $J\subset(0,1)$ and a
	constant $\eta>0$ such that $\phi'$ has a constant sign on $J$ and
	\(|\phi'(t)|\geq\eta\) for \(t\in J\).
	Choose $t_0\in\operatorname{int}J$ and $c_0>0$ sufficiently small so that
	\([t_0-3c_0,t_0+3c_0]\subset J\).
	
	Let $I=[u,v]$ be an interval with $u<v$, and define the middle half of $I$ by
	\[
	I_0=\left[u+\frac{v-u}{4},\,v-\frac{v-u}{4}\right].
	\]
	The solutions of
	\(
	b_nx+\theta_n\equiv t_0\pmod 1
	\)
	form an arithmetic progression with spacing $1/b_n$.  Since
	$b_n\to\infty$, for all sufficiently large $k$ at least one such
	solution belongs to $I_0$.  Choose one and denote it by $x_n$.
	
	Set
	\[
	J_n=\left[x_n-\frac{2c_0}{b_n},
	x_n+\frac{2c_0}{b_n}\right].
	\]
	Because $x_n\in I_0$ and $2c_0/b_n\to0$, we have $J_n\subset I$ for
	all sufficiently large $k$.  Moreover, if $x\in J_n$, then, using
	the representative of $b_nx+\theta_n$ near $t_0$,
	\[
	b_nx+\theta_n
	\equiv t_0+b_n(x-x_n)
	\pmod 1
	\]
	and
	\[
	t_0+b_n(x-x_n)
	\in[t_0-2c_0,t_0+2c_0]\subset J.
	\]
	It follows that the derivative of the $n$th summand \(a_n\phi(b_nx+\theta_n)\)	has a constant sign on $J_n$ and satisfies
	\[
	\bigl|a_nb_n\phi'(b_nx+\theta_n)\bigr|
	\geq\eta a_nb_n
	\qquad(x\in J_n).
	\]
	
	By \eqref{eq:basic-norms},
	\(|P_{n-1}'(x)|\leq M_1D_{n-1}\) for \(x\in\mathbb R\).
	The first limit in \eqref{eq:lower-criterion} gives
	\(M_1D_{n-1}=o(a_nb_n)\). From $P_n(x)=P_{n-1}(x)+a_n\phi(b_nx+\theta_n)$, it follows that, for sufficiently large $k$, the derivative $P_n'$ has
	the same sign as the derivative of the $n$th summand throughout $J_n$, and
	\begin{equation*} 
		|P_n'(x)|
		\geq \eta a_nb_n-M_1D_{n-1}
		\geq\frac{\eta}{2}a_nb_n
		\qquad(x\in J_n).
	\end{equation*}
	In particular, $P_n$ is strictly monotone on $J_n$.
	
	We now choose the two covering scales.  Let
	\[
	R_n=\frac{c_0}{b_n},
	\qquad
	r_n=4M_0T_n,
	\qquad
	z_n=(x_n,f(x_n)).
	\]
	The second limit in \eqref{eq:lower-criterion} implies that
	\begin{equation*}
		\frac{r_n}{R_n}
		=\frac{4M_0}{c_0}b_nT_n
		\longrightarrow0.
	\end{equation*}
	Thus $0<r_n<R_n$ for sufficiently large $k$.
	
	Suppose that $(x,f(x))\in B(z_n,R_n)\cap\G_f(I)$. Then $|x-x_n|\leq R_n={c_0}/{b_n}$. 	Consequently, $x\in J_n$.  Furthermore, by \eqref{eq:basic-norms}, we have
	\begin{align}
		|P_n(x)-P_n(x_n)|
		\leq |f(x)-f(x_n)|
		+|H_n(x)|+|H_n(x_n)|	\leq R_n+2M_0T_n.
		\label{eq:vertical-pn}
	\end{align}
	
	Define
	\[
	E_n=
	\left\{x\in J_n:
	|P_n(x)-P_n(x_n)|
	\leq R_n+2M_0T_n\right\}.
	\]
	Since $P_n$ is continuous and strictly monotone on $J_n$, the set
	$E_n$ is an interval.  Let $A_n=\graph(P_n|_{E_n})$. The length of the graph of a monotone $C^1$ function is bounded by
	the sum of its horizontal and vertical variations.  Hence
	\begin{align*}
		 \operatorname{length}(A_n)  \leq \diam E_n +\int_{E_n}|P_n'(x)|\,dx  \leq 4R_n+2(R_n+2M_0T_n)\leq 6R_n+4M_0T_n.
	\end{align*}
	Since $M_0T_n=r_n/4=o(R_n)$, there is a constant $C_1$, independent
	of $n$, such that
	\begin{equation}\label{eq:arc-length}
	 \operatorname{length}(A_n)\leq C_1R_n
	\end{equation}
	for sufficiently large $k$.
	
	For every $(x,f(x))\in B(z_n,R_n)\cap\G_f(I)$, the point
	$(x,P_n(x))$ belongs to $A_n$ by \eqref{eq:vertical-pn}, and
	\[
	|f(x)-P_n(x)|=|H_n(x)|
	\leq M_0T_n=\frac{r_n}{4}.
	\]
	Therefore
	\[
	B(z_n,R_n)\cap\G_f(I)
	\subset (A_n)_{r_n/4},
	\]
	where $(A_n)_{r_n/4}$ denotes the Euclidean $r_n/4$-neighbourhood
	of $A_n$.
	
	Parametrize $A_n$ by arclength and choose a finite sequence of
	points on $A_n$, including its endpoints, so that consecutive
	points are at arclength distance at most $r_n/4$.  The number of
	chosen points is at most
	\(2+{4\operatorname{length}(A_n)}/{r_n}\).
	Every point of $A_n$ lies within Euclidean distance $r_n/4$ of one
	of these points.  It follows that every point of $(A_n)_{r_n/4}$
	lies within distance $r_n/2$ of one of them.  Balls of radius
	$r_n/2$ have diameter $r_n$, and hence \eqref{eq:arc-length} gives
	\begin{equation}\label{eq:lower-witness}
		N_{r_n}\bigl(B(z_n,R_n)\cap\G_f(I)\bigr)
		\leq 2+\frac{4C_1R_n}{r_n}
		\leq C_2\frac{R_n}{r_n}
	\end{equation}
	for a constant $C_2$ independent of $n$.  In the last inequality
	we used $R_n/r_n\to\infty$.
	
	Suppose, towards a contradiction, that
	$\dimL\G_f(I)>1$.  Choose
	\(1<s<\dimL\G_f(I)\).
	By the definition of the lower dimension, there exist $c,\rho>0$
	such that
	\[
	N_r(B(z,R)\cap\G_f(I))
	\geq c\left(\frac{R}{r}\right)^s
	\]
	for every $z\in\G_f(I)$ and all $0<r<R<\rho$.  Since $R_n\to0$
	and $r_n/R_n\to0$, this estimate applies to
	$(z_n,R_n,r_n)$ for all sufficiently large $k$.  Combining it with
	\eqref{eq:lower-witness}, we obtain
	\[
	c\left(\frac{R_n}{r_n}\right)^{s-1}\leq C_2.
	\]
	This is impossible because $s>1$ and $R_n/r_n\to\infty$.  Thus
	\(\dimL\G_f(I)\leq1\).
	
	For completeness, we verify the reverse inequality.  The graph
	$\G_f(I)$ is a compact connected set containing more than one
	point.  Let
	\[
	E=\G_f(I),\qquad 0<R<\frac{\diam E}{4},
	\]
	and fix $z\in E$.  There exists $y\in E$ such that $d(z,y)>R$.
	Since $E$ is connected and the function $w\mapsto d(z,w)$ is
	continuous, its image is an interval containing $0$ and a number
	larger than $R$.  It therefore contains $[0,R]$.  If $0<r<R/4$,
	choose points $w_j\in E$ satisfying
	\[
	d(z,w_j)=(2j-1)r,
	\qquad
	1\leq j\leq\left\lfloor\frac{R}{2r}\right\rfloor.
	\]
	All these points belong to $B(z,R)\cap E$, and for $i\neq j$,
	the reverse triangle inequality gives
	\[
	d(w_i,w_j)
	\geq\bigl|d(z,w_i)-d(z,w_j)\bigr|
	\geq2r.
	\]
	A set of diameter at most $r$ can therefore contain at most one
	of the points $w_j$.  Hence
	\[
	N_r(B(z,R)\cap E)
	\geq\left\lfloor\frac{R}{2r}\right\rfloor
	\geq\frac{R}{4r}
	\qquad(0<r<R/4).
	\]
	If $R/4\leq r<R$, then
	\[
	N_r(B(z,R)\cap E)\geq1\geq\frac{R}{4r}.
	\]
	Consequently,
	\(N_r(B(z,R)\cap E)\geq {R}/{4r}\) for \(0<r<R<\diam E/4\), 
	and therefore $\dimL E\geq1$.  We conclude that
	\(\dimL\G_f(I)=1\).
	
	We next prove part~\ref{item:assouad-general}.  Choose a strictly
	increasing  sequence of indices $(n_k)_{k\geq1}$ along which all the limits in
	\eqref{eq:assouad-criterion} hold simultaneously, and again set
	$n=n_k$.
	
	By assumption,  $\phi'$ is not identically zero and $\int_0^1\phi'(t)dt=\phi(1)-\phi(0)=0$.  We may therefore choose
	$t_+,t_-\in[0,1)$ and $\eta>0$ such that
	\[
	\phi'(t_+)\geq2\eta,
	\qquad
	\phi'(t_-)\leq-2\eta.
	\]
	
	Let
	\[
	I_1=
	\left[u+\frac{3(v-u)}8,\,
	v-\frac{3(v-u)}8\right],
	\]
	which is a closed interval contained in the interior of $I_0$.
	As above, the solutions of $b_nx+\theta_n\equiv t_+ \pmod1$,
	have spacing $1/b_n$.  Hence, for sufficiently large $k$, we
	may choose a solution $x_{n,+}\in I_1$ and an integer \(m_n\) such that $b_nx_{n,+}+\theta_n=t_++m_n$.
	
	Define
	\[
	x_{n,-}=\frac{t_-+m_n-\theta_n}{b_n}.
	\]
	It follows that $b_nx_{n,-}+\theta_n\equiv t_-\pmod 1$	and
	\[
	|x_{n,-}-x_{n,+}|
	=\frac{|t_--t_+|}{b_n}
	<\frac1{b_n}.
	\]
	Since $x_{n,+}\in I_1$, $I_1$ has positive distance from
	 the endpoints of $I_0$, and $b_n^{-1}\to0$, both $x_{n,+}$ and $x_{n,-}$
	belong to $I_0$ for all sufficiently large $k$.
	
	Using \eqref{eq:basic-norms}, we obtain
	\begin{align*}
		P_n'(x_{n,+})
		&=P_{n-1}'(x_{n,+})
		+a_nb_n\phi'(t_+)
		\geq-M_1D_{n-1}+2\eta a_nb_n, \\
		P_n'(x_{n,-})
		&=
		P_{n-1}'(x_{n,-})
		+a_nb_n\phi'(t_-)\leq M_1D_{n-1}-2\eta a_nb_n.
	\end{align*}
	The condition
	\({D_{n-1}}/{a_nb_n}\to 0\) in \eqref{eq:assouad-criterion} shows that
	\[
	P_n'(x_{n,+})>0,
	\qquad
	P_n'(x_{n,-})<0
	\]
	for all sufficiently large $k$.  Since $P_n'$ is continuous, the
	intermediate value theorem gives a point $c_n$ between
	$x_{n,+}$ and $x_{n,-}$ such that 	\(P_n'(c_n)=0\).
	In particular, $c_n\in I_0$.
	
	Set
	\(R_n=a_{n+1}, \;  r_n=1/{b_{n+1}}\),
	and let
	\[
	I_n=\left[c_n-\frac{R_n}{2},\,
	c_n+\frac{R_n}{2}\right].
	\]
	Since $a_{n+1}\to0$ and $I_0$ has positive distance from the endpoints of $I$, we have
	$I_n\subset I$ for all sufficiently large $k$.
	
	For $x\in I_n$, Taylor's theorem, $P_n'(c_n)=0$, and
	\eqref{eq:basic-norms} give
	\begin{align*}
		|P_n(x)-P_n(c_n)|\leq\frac12\|P_n''\|_\infty|x-c_n|^2
		\leq\frac{M_2K_n}{2}\left(\frac{R_n}{2}\right)^2
		=\frac{M_2}{8}K_nR_n^2.
	\end{align*}
    By \eqref{eq:assouad-criterion}, we have $K_nR_n=K_na_{n+1}\to0$, hence
	\begin{equation*} 
		\sup_{x\in I_n}|P_n(x)-P_n(c_n)|=o(R_n).
	\end{equation*}
	It follows that
	\begin{align*}
		|f(x)-f(c_n)|
		&\leq |P_n(x)-P_n(c_n)|	+a_{n+1}\bigl|
		\phi(b_{n+1}x+\theta_{n+1})
		-\phi(b_{n+1}c_n+\theta_{n+1})
		\bigr|\\
		&\quad+|H_{n+1}(x)|+|H_{n+1}(c_n)|\\
		&\leq o(R_n)+2M_0R_n+2M_0T_{n+1}.
	\end{align*}
	The condition $T_{n+1}/a_{n+1}\to0$ in \eqref{eq:assouad-criterion} yields
	$T_{n+1}=o(R_n)$.  Hence there is a constant $C_0>1$,
	independent of $n$, such that
	\[
	|f(x)-f(c_n)|\leq C_0R_n
	\qquad(x\in I_n)
	\]
	for all sufficiently large $k$.  Since also
	$|x-c_n|\leq R_n/2$, enlarging $C_0$ if necessary gives
	\begin{equation}\label{eq:local-containment}
		\graph(f|_{I_n})
		\subset B(z_n,C_0R_n),
		\qquad
		z_n=(c_n,f(c_n)).
	\end{equation}
	
	The interval $I_n$ has length $R_n$, whereas one period of the
	$(n+1)$st summand has length $r_n=1/b_{n+1}$.  Thus $I_n$
	contains at least
	\(
	\lfloor{R_n}/{r_n}\rfloor-2
	\)
	complete period intervals.  Moreover,
	\(
	{R_n}/{r_n}=a_{n+1}b_{n+1}\to\infty
	\)
	by \eqref{eq:assouad-criterion}.  Retain every fourth complete
	period interval and denote the resulting family by $\mathcal J_n$.
	There is a constant $c_1>0$, independent of $n$, such that
	\begin{equation}\label{eq:number-periods}
		\#\mathcal J_n
		\geq c_1\frac{R_n}{r_n}
	\end{equation}
	for all sufficiently large $k$.  The horizontal distance between
	any two distinct intervals in $\mathcal J_n$ is at least $2r_n$.
	
	For each $J\in\mathcal J_n$, \cref{lem:three-block} gives
	\[
	\osc_J f
	\geq
	\kappa a_{n+1}
	-\frac{M_1D_n}{b_{n+1}}
	-2M_0T_{n+1}.
	\]
	Using 	\eqref{eq:assouad-criterion}, we obtain
	\begin{equation}\label{eq:period-oscillation}
		\osc_J f\geq\frac{\kappa}{2}R_n
		\qquad(J\in\mathcal J_n)
	\end{equation}
	for all sufficiently large $k$.
	
	For a fixed $J\in\mathcal J_n$, since \(f\) is continuous, \(\graph(f|_J)\) is connected.  Its projection onto the vertical axis is an interval of length $\osc_Jf$.   It follows from
	\eqref{eq:period-oscillation} that at least \({\kappa R_n}/{2r_n}\)
	sets of diameter at most $r_n$ are required to cover
	$\graph(f|_J)$.  Moreover, because distinct retained intervals are
	horizontally separated by at least $2r_n$, a set of diameter at
	most $r_n$ cannot meet the graphs above two different intervals in
	$\mathcal J_n$.  Summing over all $J\in\mathcal J_n$ and using
	\eqref{eq:number-periods}, we obtain
	\[
	N_{r_n}\bigl(\graph(f|_{I_n})\bigr)
	\geq
	c_2\left(\frac{R_n}{r_n}\right)^2
	\]
	for some constant $c_2>0$. Therefore, by \eqref{eq:local-containment},  it follows that 
	\begin{equation}\label{eq:quadratic}
		N_{r_n}\bigl(B(z_n,C_0R_n)\cap\G_f(I)\bigr)
		\geq
		c_2\left(\frac{R_n}{r_n}\right)^2.
	\end{equation}
	
	Suppose that $\dimA\G_f(I)<2$.  Choose 	\(\dimA\G_f(I)<s<2\).
	By the definition of Assouad dimension, there exist $C_s,\rho>0$
	such that
	\[
	N_r(B(z,R)\cap\G_f(I))
	\leq C_s\left(\frac{R}{r}\right)^s
	\]
	for all $z\in\G_f(I)$ and $0<r<R<\rho$.  Since
	$C_0R_n\to0$ and \({C_0R_n}/{r_n}\to\infty\),
	we may apply this estimate with outer radius $C_0R_n$ and inner
	scale $r_n$.  Together with \eqref{eq:quadratic}, this yields
	\[
	c_2
	\left(\frac{R_n}{r_n}\right)^{2-s}
	\leq C_sC_0^s,
	\]
	which is impossible because $s<2$ and $R_n/r_n\to\infty$.
	Therefore \(\dimA\G_f(I)\geq2\). 	The opposite inequality follows from
	$\G_f(I)\subset\mathbb R^2$.  Hence  \(\dimA\G_f(I)=2\),
	which completes the proof.
\end{proof}

\section{Power amplitudes and the Assouad spectrum}
\label{sec:local}

\begin{lemma}\label{lem:dominance}
Let $0<\alpha<1$, $a_n=b_n^{-\alpha}$, and
$b_{n+1}/b_n\to\infty$.  Then
\begin{align}\label{eq:dominance}
 D_n&=(1+o(1))b_n^{1-\alpha}, \qquad  K_n =(1+o(1))b_n^{2-\alpha},\\
 T_n&=(1+o(1))b_{n+1}^{-\alpha}.\label{eq:tail-dominance}
\end{align}
\end{lemma}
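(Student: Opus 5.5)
The three asymptotics all reduce to one principle. Under $b_{n+1}/b_n\to\infty$, a sum of geometric-type terms $b_j^{\gamma}$ with $\gamma>0$ is dominated by its last term. A tail sum $\sum_{j>n} b_j^{-\alpha}$ is dominated by its first term. The plan is to prove each statement by comparing ratios of consecutive terms with a geometric series.

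For $D_n=\sum_{j\le n} b_j^{1-\alpha}$, fix $\varepsilon\in(0,1)$ and choose $N$ so that $b_{j}/b_{j+1}\le \varepsilon^{1/(1-\alpha)}$ for $j\ge N$. Since $1-\alpha>0$, for $N\le j\le n$ we get
\[
b_j^{1-\alpha}\le \varepsilon^{\,n-j}b_n^{1-\alpha}.
\]
Hence
\[
D_n \le D_{N-1}+ b_n^{1-\alpha}\sum_{i\ge0}\varepsilon^i = D_{N-1}+\frac{b_n^{1-\alpha}}{1-\varepsilon}.
\]
Since $b_n\to\infty$ (which follows from $b_{n+1}/b_n\to\infty$), the fixed constant $D_{N-1}$ is $o(b_n^{1-\alpha})$. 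Combined with the trivial bound $D_n\ge b_n^{1-\alpha}$, this gives $1\le D_n/b_n^{1-\alpha}\le 1/(1-\varepsilon)+o(1)$. Letting $\varepsilon\to0$ yields $D_n=(1+o(1))b_n^{1-\alpha}$.

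The same argument with exponent $2-\alpha>0$ gives $K_n=(1+o(1))b_n^{2-\alpha}$.

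For the tail $T_n=\sum_{j\ge n+1} b_j^{-\alpha}$, we have $T_n\ge b_{n+1}^{-\alpha}$. For $n\ge N$ and $j\ge n+1$, the ratios give
\[
b_j^{-\alpha}\le \varepsilon^{\,j-n-1} b_{n+1}^{-\alpha},
\]
where now $N$ is chosen so that $(b_j/b_{j+1})^{\alpha}\le\varepsilon$ for $j\ge N$. Therefore $T_n\le b_{n+1}^{-\alpha}/(1-\varepsilon)$, and no initial-segment correction is needed because every term lies beyond $N$. Letting $\varepsilon\to0$ gives $T_n=(1+o(1))b_{n+1}^{-\alpha}$.

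The only mildly delicate point is the initial segment in $D_n$ and $K_n$, where the ratio bound fails for $j<N$. This is handled by absorbing $D_{N-1}$ and $K_{N-1}$ into the $o(\cdot)$ term using $b_n\to\infty$ and the positivity of the exponents $1-\alpha$ and $2-\alpha$. Positivity of the exponents is exactly where $0<\alpha<1$ is used. I do not expect any genuine obstacle.
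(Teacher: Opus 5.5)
Your proof is correct and follows essentially the same route as the paper. Comparing consecutive ratios with a geometric series shows that $\sum_{j\le n}b_j^{s}$ is dominated by $b_n^{s}$ for $s=1-\alpha,\,2-\alpha$, with the initial segment absorbed using $b_n\to\infty$, and that $T_n$ is dominated by its first term $b_{n+1}^{-\alpha}$; the only difference is that the paper runs the partial-sum argument once for a general exponent $s>0$, whereas you repeat it for each exponent.
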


\begin{proof}
Fix $s>0$.  Given $\varepsilon\in(0,1)$, choose $n_0$ so that
$b_j/b_{j+1}\leq\varepsilon$ for $j\geq n_0$.  For $n-k\geq n_0$,
\[
 \left(\frac{b_{n-k}}{b_n}\right)^s\leq\varepsilon^{ks}.
\]
Consequently,
\[
 \sum_{j=n_0}^{n-1}\left(\frac{b_j}{b_n}\right)^s
 \leq\frac{\varepsilon^s}{1-\varepsilon^s}.
\]
The finite sum with $j<n_0$, divided by $b_n^s$, tends to zero.  Letting
$\varepsilon\downarrow0$ shows
$\sum_{j=1}^n b_j^s=(1+o(1))b_n^s$.  Taking
$s=1-\alpha$ and $s=2-\alpha$ proves \eqref{eq:dominance}.

Similarly, for $k\geq1$ and large $n$,
\[
 \frac{a_{n+1+k}}{a_{n+1}}
 =\left(\frac{b_{n+1}}{b_{n+1+k}}\right)^\alpha
 \leq\varepsilon^{k\alpha}.
\]
Hence
\[
 1\leq\frac{T_n}{a_{n+1}}
 \leq1+\frac{\varepsilon^\alpha}{1-\varepsilon^\alpha},
\]
which proves \eqref{eq:tail-dominance} after
$\varepsilon\downarrow0$.
\end{proof}

The following equivalence concerns the hypotheses of \cref{thm:block}; it does
not assert that the gap conditions are necessary for the dimension values.

\begin{proposition}
\label{prop:exact-reduction}
Assume $0<\alpha<1$, $a_n=b_n^{-\alpha}$, and
$b_{n+1}/b_n\to\infty$.
\begin{enumerate}[label=\textup{(\roman*)}]
\item There exists a strictly increasing  sequence of indices $(n_k)$ for which both
limits in \eqref{eq:lower-criterion} hold if and only if  $\liminf\limits_{n\to\infty}{b_n}/{b_{n+1}^{\alpha}}=0$.
\item There exists a strictly increasing  sequence of indices $(n_k)$ for which all the
limits in \eqref{eq:assouad-criterion} hold simultaneously if and only if 
$\liminf\limits_{n\to\infty}{b_n^{2-\alpha}}/{b_{n+1}^{\alpha}}=0$.
\end{enumerate}
\end{proposition}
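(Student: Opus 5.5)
The plan is to substitute the asymptotics of \cref{lem:dominance} into each quantity in \eqref{eq:lower-criterion} and \eqref{eq:assouad-criterion}. This reduces every term to a power of $b_n/b_{n+1}$ or to one of the two gap ratios in the statement. Since all the asymptotics are of the form $(1+o(1))\times$(explicit power), a term tends to zero along $(n_k)$ exactly when its explicit power does.

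For (i), \cref{lem:dominance} gives
\[
\frac{D_{n-1}}{a_nb_n}=(1+o(1))\Bigl(\frac{b_{n-1}}{b_n}\Bigr)^{1-\alpha}\to0
\]
along the full sequence, because $b_n/b_{n-1}\to\infty$ and $1-\alpha>0$. Likewise
\[
b_nT_n=(1+o(1))\,\frac{b_n}{b_{n+1}^{\alpha}}.
\]
The first condition in \eqref{eq:lower-criterion} is therefore automatic, and the second holds along some strictly increasing $(n_k)$ if and only if $b_{n_k}/b_{n_k+1}^{\alpha}\to0$. Such a subsequence exists precisely when the $\liminf$ is zero, since the ratio is positive.

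For (ii), I compute each term in the same way:
\begin{align*}
\frac{D_{n-1}}{a_nb_n}&\to0 \quad\text{(as above)},\\
K_na_{n+1}&=(1+o(1))\,\frac{b_n^{2-\alpha}}{b_{n+1}^{\alpha}},\\
\frac{T_{n+1}}{a_{n+1}}&=(1+o(1))\Bigl(\frac{b_{n+1}}{b_{n+2}}\Bigr)^{\alpha}\to0,\\
\frac{D_n}{a_{n+1}b_{n+1}}&=(1+o(1))\Bigl(\frac{b_n}{b_{n+1}}\Bigr)^{1-\alpha}\to0,\\
a_{n+1}b_{n+1}&=b_{n+1}^{1-\alpha}\to\infty.
\end{align*}
The last line holds because $b_n\to\infty$, which follows from $b_{n+1}/b_n\to\infty$. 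So every requirement in \eqref{eq:assouad-criterion} holds along the full sequence except $K_{n_k}a_{n_k+1}\to0$. That condition is equivalent to $b_{n_k}^{2-\alpha}/b_{n_k+1}^{\alpha}\to0$, and such a subsequence exists if and only if the stated $\liminf$ is zero.

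The only point needing care is the bookkeeping of index shifts, such as $T_{n+1}$ versus $a_{n+2}$ and $D_{n-1}$ versus $b_{n-1}$, when applying \cref{lem:dominance}. One also has to note that $D_{n-1}$ for $n=1$ is $D_0=0$, which is harmless for large $n$. There is no genuine obstacle: both directions of each equivalence follow at once from the fact that the $(1+o(1))$ factors are bounded above and below.
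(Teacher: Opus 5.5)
Your proposal is correct and follows the paper's proof essentially line by line. You substitute the asymptotics of \cref{lem:dominance}, note that every term except $b_nT_n$ in (i) and $K_na_{n+1}$ in (ii) has the required limit along the full sequence, and then reduce the remaining condition to the stated limit inferior (implicitly using, as the paper does, that the summands in \eqref{eq:assouad-criterion} are positive, so the sum tends to zero exactly when each term does).
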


\begin{proof}
By \cref{lem:dominance}, 
\[
 \frac{D_{n-1}}{a_nb_n}
 =(1+o(1))\left(\frac{b_{n-1}}{b_n}\right)^{1-\alpha}
 \longrightarrow0,
 \qquad
 b_nT_n=(1+o(1))\frac{b_n}{b_{n+1}^{\alpha}}.
\]
Therefore the two limits in \eqref{eq:lower-criterion} hold along one
strictly increasing sequence precisely when the ratio in part~(i) tends to
zero along that sequence.  This is equivalent to the asserted limit inferior.

For the Assouad criterion, four requirements hold along the full sequence:
\begin{align*}
 a_{n+1}b_{n+1}&=b_{n+1}^{1-\alpha}\longrightarrow\infty,\\
 \frac{D_{n-1}}{a_nb_n}
 &=(1+o(1))\left(\frac{b_{n-1}}{b_n}\right)^{1-\alpha}
 \longrightarrow0,\\
 \frac{T_{n+1}}{a_{n+1}}
 &=(1+o(1))\left(\frac{b_{n+1}}{b_{n+2}}\right)^\alpha
 \longrightarrow0,\\
 \frac{D_n}{a_{n+1}b_{n+1}}
 &=(1+o(1))\left(\frac{b_n}{b_{n+1}}\right)^{1-\alpha}
 \longrightarrow0.
\end{align*}
The only remaining term is $K_na_{n+1} =(1+o(1)){b_n^{2-\alpha}}/{b_{n+1}^{\alpha}}$.
It tends to zero along some strictly increasing sequence if and only if the
limit inferior in part~(ii) is zero.
\end{proof}

A useful sufficient condition can be stated in terms of logarithmic gaps.

\begin{proposition}\label{prop:log-gap}
Assume $0<\alpha<1$ and let $(b_n)$ be a sequence of positive real numbers
with $b_n\to\infty$.
\begin{enumerate}[label=\textup{(\roman*)}]
\item If $\limsup\limits_{n\to\infty}{\log b_{n+1}}/{\log b_n}>{1}/{\alpha}$,
then $\liminf\limits_{n\to\infty}{b_n}/{b_{n+1}^{\alpha}}=0$.
\item If $\limsup\limits_{n\to\infty}{\log b_{n+1}}/{\log b_n}> {(2-\alpha)}/{\alpha}$, 
then $\liminf\limits_{n\to\infty} {b_n^{2-\alpha}}/{b_{n+1}^{\alpha}}=0$.
\end{enumerate}
\end{proposition}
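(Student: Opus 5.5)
Both parts follow the same scheme: take logarithms and pass to a subsequence along which the ratio $\log b_{n+1}/\log b_n$ stays strictly above the threshold. Let $\gamma$ denote the threshold, with $\gamma=1/\alpha$ in part~(i) and $\gamma=(2-\alpha)/\alpha$ in part~(ii). Let $p$ be the exponent of $b_n$ in the numerator, so $p=1$ in part~(i) and $p=2-\alpha$ in part~(ii). In both cases $\gamma=p/\alpha$.

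\textbf{Step 1.} By hypothesis the limit superior exceeds $\gamma$. Choose $\gamma'$ with $\gamma<\gamma'$, strictly below the limit superior; the limit superior may be $+\infty$, which causes no difficulty. Then there is a strictly increasing sequence $(n_k)$ with
\[
\log b_{n_k+1}\geq\gamma'\log b_{n_k}
\]
for all $k$. Since $b_n\to\infty$, we may also assume $\log b_{n_k}>0$ for every $k$.

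\textbf{Step 2.} Compute
\[
\log\frac{b_{n_k}^{p}}{b_{n_k+1}^{\alpha}}
=p\log b_{n_k}-\alpha\log b_{n_k+1}
\leq(p-\alpha\gamma')\log b_{n_k}.
\]
Because $\gamma'>p/\alpha$, the coefficient $p-\alpha\gamma'$ is strictly negative. Since $\log b_{n_k}\to\infty$, the right-hand side tends to $-\infty$. Hence $b_{n_k}^p/b_{n_k+1}^\alpha\to0$, which shows that the limit inferior in question is $0$.

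The only point needing care is that positivity of $\log b_{n_k}$ is what allows the inequality in Step~1 to be multiplied by $\alpha$ and compared without reversing direction. This holds for all large indices because $b_n\to\infty$, so no real obstacle arises. Note also that part~(i) needs no growth hypothesis on $b_{n+1}/b_n$ beyond $b_n\to\infty$.
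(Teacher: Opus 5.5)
Your proposal is correct and follows essentially the same route as the paper. The paper also picks infinitely many $n$ with $\log b_{n+1}\geq(\gamma+\varepsilon)\log b_n$ and bounds $\log\bigl(b_n^{p}/b_{n+1}^{\alpha}\bigr)\leq-\alpha\varepsilon\log b_n\to-\infty$; your unified treatment via $\gamma=p/\alpha$ and your remark on the positivity of $\log b_{n_k}$ are just slightly more explicit versions of the same argument.
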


\begin{proof}
For part~(i), choose $\varepsilon>0$ and infinitely many $n$ such that
$\log b_{n+1}\geq(\alpha^{-1}+\varepsilon)\log b_n$.
Along these indices,
\[
 \log\left(\frac{b_n}{b_{n+1}^{\alpha}}\right)
 \leq-\alpha\varepsilon\log b_n\longrightarrow-\infty,
\]
which proves the first assertion. 

For part~(ii), the same argument gives infinitely many $n$ for which
\[
 \log b_{n+1}\geq
 \left(\frac{2-\alpha}{\alpha}+\varepsilon\right)\log b_n.
\]
Hence
\[
 \log\left(\frac{b_n^{2-\alpha}}{b_{n+1}^{\alpha}}\right)
 \leq-\alpha\varepsilon\log b_n\longrightarrow-\infty,
\]
proving the second assertion.
\end{proof}

\begin{remark} \label{rem:critical-gap}
	The strict inequality in \cref{prop:log-gap}\textup{(ii)} cannot be
	replaced by equality.  Indeed, let \(b_n\to\infty\), set $q={(2-\alpha)}/{\alpha}$
 	and suppose that
	\[
	b_{n+1}=b_n^qL_n, \qquad L_n>0,
	\qquad
	\frac{\log L_n}{\log b_n}\longrightarrow0.
	\]
	Then
	\[
	\frac{\log b_{n+1}}{\log b_n}
	=
	q+\frac{\log L_n}{\log b_n}
	\longrightarrow q,
	\]
	whereas
	\[
	\frac{b_n^{2-\alpha}}{b_{n+1}^{\alpha}}
	=
	L_n^{-\alpha}.
	\]
	Consequently, at the critical logarithmic growth rate
	\(q\),   $\liminf\limits_{n\to\infty} {b_n^{2-\alpha}}/{b_{n+1}^{\alpha}}=0$ holds if and only if
	\(\limsup\limits_{n\to\infty}L_n=\infty\). Thus the limiting ratio
	\(\log b_{n+1}/\log b_n\) does not determine whether  $\liminf\limits_{n\to\infty} {b_n^{2-\alpha}}/{b_{n+1}^{\alpha}}=0$ holds at the critical value; the limiting behaviour of the lower-order factor \(L_n\) also affects the conclusion.
\end{remark}

\begin{proof}[Proof of \cref{thm:power}]
	We first prove the H\"older continuity of $f_\alpha$.  Since
	$b_{n+1}/b_n\to\infty$, there is $n_0>1$ such that $(b_n)$ is
	strictly increasing for $n\geq n_0$.  Moreover, after increasing $n_0$
	if necessary, we may assume that
	\[
	b_{n+1}\geq 2b_n
	\qquad(n\geq n_0).
	\]
	It follows that $b_n\to\infty$ and
	\(\sum_{n=1}^{\infty}b_n^{-\alpha}<\infty\).
	Thus the series defining $f_\alpha$ converges uniformly on $\mathbb R$.
	
	Let $x,y\in\mathbb R$ and write $h=|x-y|$.  Suppose first that
	\(0<h\leq b_{n_0}^{-1}\).
	Since $(b_n)_{n\ge n_0}$ is strictly increasing and tends to infinity, there is a
	unique $n\geq n_0$ such that
	\(b_{n+1}^{-1}<h\leq b_n^{-1}\).
	Applying \eqref{eq:osc-upper} to the interval with endpoints $x$ and
	$y$, and then using \cref{lem:dominance}, we obtain
	\begin{align*}
		|f_\alpha(x)-f_\alpha(y)|\leq M_1D_nh+2M_0T_n \leq C_0\bigl(b_n^{1-\alpha}h+b_{n+1}^{-\alpha}\bigr)\le 2C_0 h^{\alpha}
	\end{align*}
	for some constant $C_0>0$ independent of $x,y$ and $n$.
	
    If $h>b_{n_0}^{-1}$, then uniform convergence gives
	\[
	\|f_\alpha\|_\infty
	\leq M_0\sum_{n=1}^{\infty}b_n^{-\alpha}
	<\infty,
	\]
    and $|f_\alpha(x)-f_\alpha(y)|
		\leq 2\|f_\alpha\|_\infty \leq
		2\|f_\alpha\|_\infty b_{n_0}^{\alpha}h^\alpha$.    Consequently, for any $x,y\in \mathbb R$, we have
	$|f_\alpha(x)-f_\alpha(y)|	\leq C_\alpha|x-y|^\alpha$,
	where \(C_\alpha= \max\{2C_0,\, 2\|f_\alpha\|_\infty b_{n_0}^{\alpha}\}\). 	Thus $f_\alpha$ is  $\alpha$-H\"older continuous.
	
	We now prove part~\textup{(i)}. Under the assumption, \cref{prop:exact-reduction}\textup{(i)} gives a strictly increasing
	 sequence of indices $(n_k)$ along which both limits in
	\eqref{eq:lower-criterion} hold simultaneously.  Therefore,
	\cref{thm:block}\ref{item:lower-general} applies and yields
	\(\dimL\G_{f_\alpha}(I)=1\).
		
	For part~\textup{(ii)},    \cref{prop:exact-reduction}\textup{(ii)} gives a strictly increasing  sequence of indices $(n_k)$  along which all the limits in \eqref{eq:assouad-criterion} hold.  Hence
	\(\dimA\G_{f_\alpha}(I)=2\) by \cref{thm:block}\ref{item:assouad-general}.
	
	Moreover,
	\[
	\frac{b_n}{b_{n+1}^{\alpha}}
	=
	\frac{b_n^{2-\alpha}}{b_{n+1}^{\alpha}}
	\frac{1}{b_n^{1-\alpha}}.
	\]
	Since \(b_n^{1-\alpha}\to\infty\), the assumption in part~\textup{(ii)}
	implies
	\[
	\liminf_{n\to\infty}\frac{b_n}{b_{n+1}^{\alpha}}=0.
	\]
	Part~\textup{(i)} therefore gives
	\(\dimL\G_{f_\alpha}(I)=1\).

	It remains to consider the Assouad spectrum.  Fix
	\(\vartheta\in[\alpha,1)\) 	and choose a  sequence of indices $(n_k)$ such that ${b_{n_k}^{2-\alpha}}/{b_{n_k+1}^\alpha}\to 0$ as $k\to\infty$.  For simplicity, write $n=n_k$ throughout the rest of the proof.
	
	The construction in the proof of
	\cref{thm:block}\ref{item:assouad-general} provides the scales
	\[
	R_n=a_{n+1}=b_{n+1}^{-\alpha},
	\qquad
	\lambda_n=b_{n+1}^{-1},
	\]
	an interval $I_n\subset I$ of length $R_n$, a point
	$z_n\in\G_{f_\alpha}(I)$, and a constant $C\geq1$, independent of
	$n$, such that
	\begin{equation}\label{eq:spectrum-localization}
		\graph(f_\alpha|_{I_n})
		\subset B(z_n,CR_n).
	\end{equation}
	The same construction also shows that every interval
	$J\subset I_n$ corresponding to a complete period of the
	$(n+1)$st summand satisfies
	\begin{equation}\label{eq:spectrum-period-oscillation}
		\osc_J f_\alpha\geq\frac{\kappa}{2}R_n
	\end{equation}
	for all sufficiently large $k$. 
	
	Set \(S_n=CR_n=Cb_{n+1}^{-\alpha}, \; 	r_n=S_n^{1/\vartheta}\).
	Thus $r_n$ is precisely the inner scale prescribed by the definition
	of the Assouad spectrum at the outer scale $S_n$.  Since $S_n\to0$
	and $\vartheta<1$,
	\[
	\frac{R_n}{r_n}	=\frac1C S_n^{1-1/\vartheta}\longrightarrow\infty.
	\]
	
	We next compare $r_n$ with the period length $\lambda_n$. 
	\begin{align}
		\frac{r_n}{\lambda_n}
		=\frac{(Cb_{n+1}^{-\alpha})^{1/\vartheta}}
		{b_{n+1}^{-1}} =C^{1/\vartheta}
		b_{n+1}^{\,1-\alpha/\vartheta}.
		\label{eq:spectrum-scale-comparison}
	\end{align}
	Because $\vartheta\geq\alpha$ and  $C\geq1$, it follows that
	\(r_n\geq\lambda_n\) for all sufficiently large $k$.  At the endpoint $\vartheta=\alpha$,
	the two scales are comparable; when $\vartheta>\alpha$, the ratio
	$r_n/\lambda_n$ tends to infinity.
	
	We now select sufficiently many mutually separated period intervals.
	Let $I_n'$ be the closed interval having the same centre as $I_n$ and
	length $R_n/2$.  Starting from the left endpoint of $I_n'$, place
	consecutive blocks of length $4r_n$, leaving a gap of length $4r_n$
	between consecutive blocks. Since \(r_n\geq\lambda_n\), each block of length \(4r_n\) contains a
	complete \(\lambda_n\)-period of the \((n+1)\)st summand.
	Choose one complete period interval from each block and denote the
	resulting family by $\mathcal J_{n,\vartheta}$.  Since $R_n/r_n\to
	\infty$, there is a constant $c_1>0$, which may depend on
	$\vartheta$ but not on $n$, such that
	\begin{equation}\label{eq:spectrum-number-periods}
		\#\mathcal J_{n,\vartheta}
		\geq c_1\frac{R_n}{r_n}
	\end{equation}
	for all sufficiently large $k$.  The gap of length $4r_n$ between
	consecutive blocks ensures that the horizontal distance between two
	distinct intervals in $\mathcal J_{n,\vartheta}$ is at least
	$4r_n$, and hence greater than $2r_n$.
	
	Fix $J\in\mathcal J_{n,\vartheta}$.  Since $f_\alpha$ is continuous,
	$\graph(f_\alpha|_J)$ is connected.  Its vertical projection is
	therefore an interval of length  $\osc_Jf_\alpha\geq\frac{\kappa}{2}R_n$ 
	by \eqref{eq:spectrum-period-oscillation}.  On the other hand, the
	vertical projection of a set of diameter at most $r_n$ has length at
	most $r_n$.  Thus at least \({\kappa R_n}/{2r_n}\)
	sets of diameter at most $r_n$ are required to cover
	$\graph(f_\alpha|_J)$.
	
	A set of diameter at most $r_n$ cannot meet the graphs above two
	distinct intervals in $\mathcal J_{n,\vartheta}$, because those
	intervals are separated horizontally by more than $2r_n$.  Summing
	the preceding covering estimate over
	$\mathcal J_{n,\vartheta}$ and using
	\eqref{eq:spectrum-number-periods}, we obtain
	\begin{equation*} 
		N_{r_n}\bigl(\graph(f_\alpha|_{I_n})\bigr)
		\geq c_2\left(\frac{R_n}{r_n}\right)^2
	\end{equation*}
	for some constant $c_2>0$ independent of $n$.
	
	By \eqref{eq:spectrum-localization}, $\graph(f_\alpha|_{I_n})\subset B(z_n,S_n)\cap\G_{f_\alpha}(I)$. Consequently,
	\begin{align}
		N_{S_n^{1/\vartheta}}
		\bigl(B(z_n,S_n)\cap\G_{f_\alpha}(I)\bigr)\geq c_2
		\left(\frac{R_n}{r_n}\right)^2 
		=\frac{c_2}{C^2}
		\left(\frac{S_n}{S_n^{1/\vartheta}}\right)^2.
		\label{eq:spectrum-lower-covering}
	\end{align}
	
	Suppose that
	\(\dimAs{\vartheta}\G_{f_\alpha}(I)<2\).
	Choose
	\(\dimAs{\vartheta}\G_{f_\alpha}(I)<s<2\).
	By the definition of the Assouad spectrum, there exist constants
	$C_s,\rho>0$ such that
	\[
	N_{S^{1/\vartheta}}
	\bigl(B(z,S)\cap\G_{f_\alpha}(I)\bigr)
	\leq
	C_s\left(\frac{S}{S^{1/\vartheta}}\right)^s
	\]
	for every $z\in\G_{f_\alpha}(I)$ and every $0<S<\rho$.  Applying
	this estimate with $S=S_n$ and $z=z_n$, and combining it with
	\eqref{eq:spectrum-lower-covering}, gives
	\[
	\frac{c_2}{C^2}
	\left(\frac{S_n}{S_n^{1/\vartheta}}\right)^{2-s}
	\leq C_s.
	\]
	This is impossible because $s<2$ and, \({S_n}/{S_n^{1/\vartheta}}\to\infty\).
	Therefore
	\(\dimAs{\vartheta}\G_{f_\alpha}(I)\geq2\).
	Since $\G_{f_\alpha}(I)\subset\mathbb R^2$, the reverse inequality is
	automatic.  Hence
	\(\dimAs{\vartheta}\G_{f_\alpha}(I)=2\).
\end{proof}

\bigskip

\begin{proof}[Proof of \cref{thm:subcritical-spectrum}]
	Fix ${(2-\alpha)}/{\beta}<\vartheta<\alpha$. Since \(f_\alpha\) is   \(\alpha\)-H\"older continuous, the
	general upper bound for the Assouad spectrum of an
	\(\alpha\)-H\"older graph gives
	\begin{equation}\label{eq:subcritical-upper}
		\dim_A^\vartheta\G_{f_\alpha}(I)
		\leq
		\frac{2-\alpha-\vartheta}{1-\vartheta};
	\end{equation}
	see \cite[Theorem~1.1]{ChrontsiosGaritsisTyson2026}. We prove the
	reverse inequality.
	
The critical-point argument in the proof of
\cref{thm:block}\ref{item:assouad-general} applies along the full
sequence, since, by \cref{lem:dominance},  ${D_{n-1}}/{b_n^{1-\alpha}}\to 0$.
Consequently, for every sufficiently large \(n\), there is a point
\(c_n\) in a fixed compact subinterval of \(\operatorname{int}I\)
such that $P_n'(c_n)=0$.
	
	Set
	\[
	R_n=b_{n+1}^{-\vartheta},
	\qquad
	\lambda_n=b_{n+1}^{-1},
	\]
	and let \(I_n\) be the interval of length \(R_n\) centred at \(c_n\).
	Then \(I_n\subset I\) for all sufficiently large \(n\).
	By 	\eqref{eq:basic-norms}, Taylor's theorem and
	\cref{lem:dominance}, for \(x\in I_n\),
	\[
	|P_n(x)-P_n(c_n)|
	\leq
	\frac{M_2K_n}{2}|x-c_n|^2
	\leq
	\frac{M_2}{8}K_nR_n^2,
	\]
	where $K_n=(1+o(1))b_n^{2-\alpha}$. 	
	
	Moreover,
	\[
	\frac{
		\log\bigl(b_n^{2-\alpha}b_{n+1}^{-\vartheta}\bigr)
	}{
		\log b_n
	}
	=
	2-\alpha
	-
	\vartheta\frac{\log b_{n+1}}{\log b_n}
	\longrightarrow
	2-\alpha-\vartheta\beta<0.
	\]
	Hence
	\[
	K_nR_n
	=
	(1+o(1))b_n^{2-\alpha}b_{n+1}^{-\vartheta}
	\longrightarrow0,
	\]
	and therefore
	\[
	\sup_{x\in I_n}|P_n(x)-P_n(c_n)|=o(R_n).
	\]
	Since \(\vartheta<\alpha\),  \(b_{n+1}^{-\alpha}=o(R_n)\), 	and \cref{lem:dominance} gives
	\(T_{n+1}=o(b_{n+1}^{-\alpha})=o(R_n)\).  Consequently, there is a constant \(C\geq1\), independent of \(n\),
	such that
	\begin{equation}\label{eq:subcritical-localization}
		\graph(f_\alpha|_{I_n})
		\subset B(z_n,CR_n),
		\qquad
		z_n=(c_n,f_\alpha(c_n)).
	\end{equation}
	
	Put
	\[
	S_n=CR_n,
	\qquad
	r_n=S_n^{1/\vartheta}
	=
	C^{1/\vartheta}b_{n+1}^{-1}.
	\]
	Thus \(r_n\asymp\lambda_n\), and
	\[
	\frac{S_n}{r_n}\asymp b_{n+1}^{1-\vartheta}
	\longrightarrow\infty.
	\]
	
	If \(J\subset I_n\) is a complete period interval of the
	\((n+1)\)st summand,   then the second estimate in
	\cref{lem:three-block}, together with \cref{lem:dominance}, gives
	\begin{align*}
		\osc_J f_\alpha
		\geq
		\kappa b_{n+1}^{-\alpha}
		-\frac{M_1D_n}{b_{n+1}}
		-2M_0T_{n+1}\geq
		\frac{\kappa}{2}b_{n+1}^{-\alpha}
	\end{align*}
	for all sufficiently large \(n\).
	
    In the remainder of the proof, \(c_1,c_2,\ldots\) denote positive
    constants that are independent of \(n\), although they may depend on
    \(\alpha\), \(\vartheta\), and \(\phi\). 	
    Since \(r_n\asymp\lambda_n\), the interval \(I_n\) contains a
	family \(\mathcal J_n\) of mutually \(2r_n\)-separated complete
	period intervals satisfying $\#\mathcal J_n \geq c_1{R_n}/{r_n}$.
	For each \(J\in\mathcal J_n\), 	\(\graph(f_\alpha|_J)\) is connected and has vertical projection
	of length at least
	\(\kappa b_{n+1}^{-\alpha}/2\). It therefore requires at least $c_2{b_{n+1}^{-\alpha}}/{r_n}$
	sets of diameter at most \(r_n\) to cover it. The horizontal
	separation of the intervals in \(\mathcal J_n\) ensures that no
	such covering set meets the graphs above two different intervals.
	Hence
	\begin{align}
		N_{r_n}\bigl(\graph(f_\alpha|_{I_n})\bigr)
		\geq
		c_3
		\frac{R_n}{r_n}
		\frac{b_{n+1}^{-\alpha}}{r_n}\geq
		c_4b_{n+1}^{2-\alpha-\vartheta}.
		\label{eq:subcritical-covering}
	\end{align}
	
	Let
	\[
	s_\vartheta
	=
	\frac{2-\alpha-\vartheta}{1-\vartheta}.
	\]
	Since
	\({S_n}/{r_n}\asymp b_{n+1}^{1-\vartheta}\), the estimate \eqref{eq:subcritical-covering} becomes
	\(N_{r_n}\bigl(\graph(f_\alpha|_{I_n})\bigr)\geq c_5 ({S_n}/{r_n})^{s_\vartheta}\).  Using \eqref{eq:subcritical-localization} and
	\(r_n=S_n^{1/\vartheta}\), we conclude that
	\[
	N_{S_n^{1/\vartheta}}
	\bigl(B(z_n,S_n)\cap\G_{f_\alpha}(I)\bigr)
	\geq
	c_5
	\left(
	\frac{S_n}{S_n^{1/\vartheta}}
	\right)^{s_\vartheta}.
	\]
	As \(S_n\to0\) and
	\(S_n/S_n^{1/\vartheta}\to\infty\), the definition of the
	Assouad spectrum yields
	\[
	\dim_A^\vartheta\G_{f_\alpha}(I)
	\geq
	s_\vartheta
	=
	\frac{2-\alpha-\vartheta}{1-\vartheta}.
	\]
	Together with \eqref{eq:subcritical-upper}, this proves the desired
	equality.
\end{proof}

\bigskip

\begin{proof}[Proof of \cref{cor:four}]
The equalities follow from \cref{thm:power} and \cref{thm:global}.  Only the strict inequalities require verification.

The assumption that  $\liminf\limits_{n\to\infty} {b_n^{2-\alpha}}/{b_{n+1}^{\alpha}}=0$ forces 
 $\beta\geq {(2-\alpha)}/{\alpha}>1$.  Indeed, if \(\beta<(2-\alpha)/\alpha\), then, by the definition of
 the limit superior, there exists \(\varepsilon>0\) such that
 \[
 \frac{\log b_{n+1}}{\log b_n}
 \leq
 \frac{2-\alpha}{\alpha}-\varepsilon
 \]
 for all sufficiently large \(n\). It would follow that
\[
 \frac{b_n^{2-\alpha}}{b_{n+1}^{\alpha}}
 \geq b_n^{\alpha\varepsilon}\longrightarrow\infty,
\]
contrary to the assumption. Therefore, $1<\beta<\infty$ and $$1< 1+\frac{1-\alpha}{1-\alpha+\alpha\beta}<2-\alpha<2.$$
This proves all the strict inequalities.
\end{proof}

\section{Final remarks and open questions}\label{sec:questions}

The conditions in \cref{thm:power} do not cover every sequence satisfying
$b_{n+1}/b_n\to\infty$.  For example, suppose that
\(b_n\asymp e^{n^2}\).
Then
\[
\frac{b_{n+1}}{b_n}\longrightarrow\infty,
\qquad
\frac{\log b_{n+1}}{\log b_n}\longrightarrow1.
\]
Since $\log b_n=n^2+O(1)$, we have
\begin{align*}
	\log\left(\frac{b_n}{b_{n+1}^{\alpha}}\right)
	&=(1-\alpha)n^2-2\alpha n+O(1),\\
	\log\left(
	\frac{b_n^{2-\alpha}}{b_{n+1}^{\alpha}}
	\right)
	&=2(1-\alpha)n^2-2\alpha n+O(1).
\end{align*}
Both expressions tend to infinity because $0<\alpha<1$. Hence neither of the gap conditions in \cref{thm:power} is satisfied, so
the theorem does not determine the Assouad or lower dimension in this case. Bara\'nski's formula nevertheless determines the Hausdorff and box
dimensions:
\[
\dimH\G_{f_\alpha}(I)
=\ldimB\G_{f_\alpha}(I)
=\udimB\G_{f_\alpha}(I)
=2-\alpha.
\]
 This leads to the following problem.

\begin{question}\label{ques:tempered}
	For the frequency sequence
	$b_n=e^{n^2}$,
	what are $\dimA\G_{f_\alpha}(I)$ and $\dimL\G_{f_\alpha}(I)$?
	More generally, suppose that ${b_{n+1}}/{b_n}\to\infty$ and  ${\log b_{n+1}}/{\log b_n}\to1$. 
	Can the Assouad and lower dimensions be determined from the asymptotic
	growth of $(b_n)$?  Can they depend on the phase sequence $(\theta_n)$
	or on the choice of the periodic function 	$\phi$?
\end{question}

We next consider the part of the Assouad spectrum that remains undetermined.
By \cref{thm:power},
\[
\dimAs{\vartheta}\G_{f_\alpha}(I)=2
\qquad(\alpha\leq\vartheta<1).
\]
For $0<\vartheta<\alpha$, the general upper bound for graphs of
$\alpha$-H\"older functions gives
\[
\dimAs{\vartheta}\G_{f_\alpha}(I)
\leq
\frac{2-\alpha-\vartheta}{1-\vartheta}
<2;
\]
see \cite{ChrontsiosGaritsisTyson2026}. Therefore, $\vartheta=\alpha$ is
the smallest parameter at which the spectrum reaches two.

When the logarithmic frequency ratios satisfy
\begin{equation}\label{question-2}
\lim_{n\to\infty}
\frac{\log b_{n+1}}{\log b_n}
=\beta>\frac{2-\alpha}{\alpha},
\end{equation}
\cref{thm:subcritical-spectrum} proves that the H\"older upper bound is
attained for ${(2-\alpha)}/{\beta}<\vartheta<\alpha$. Thus, under \eqref{question-2}, the spectrum is known for every
$\vartheta>(2-\alpha)/\beta$. The remaining interval includes the
critical value $(2-\alpha)/\beta$, where the lower-order growth of the
frequency sequence can affect the scale estimates used in the proof.

\begin{question}\label{ques:spectrum}
Assuming \eqref{question-2}, determine
	\[
	\dim_A^\vartheta\G_{f_\alpha}(I)
	\qquad
	\left(
	0<\vartheta\leq\frac{2-\alpha}{\beta}
	\right).
	\]
	In particular, is the spectrum in this range determined by
	\(\alpha\) and \(\beta\), or can it also depend on the lower-order
	growth of \((b_n)\), the phase sequence \((\theta_n)\), or the
	periodic function \(\phi\)?
\end{question}

\end{document}